\newtheorem{theorem}{Theorem}[section]
\newtheorem{proposition}{Proposition}[section]
\newtheorem{lemma}{Lemma}[section]
\newtheorem{cor}{Corollary}[section]
\newenvironment{defi}{\medskip\noindent{\sc
Definition}. }{\goodbreak\medskip}
\newenvironment{nota}{\medskip\noindent{\sc
Notation}.}{\goodbreak\medskip}
\newenvironment{remk}{\noindent{\sc
Remark}. }{\goodbreak\vskip10pt}
\newenvironment{remks}{\noindent{\sc
Remarks}. }{\goodbreak\vskip10pt}
\newenvironment{notas}{\medskip\noindent{\sc
Notations}. }{\goodbreak\medskip}
\newenvironment{exa}{\noindent{\sc
Example}. }{\goodbreak\vskip10pt}
\newenvironment{ques}{\noindent{\sc
Question}. }{\goodbreak\vskip10pt}
\def\cal{\mathcal}
\def\P{{\mathcal P}}
\def\cb{{\mathcal B}}
\def\cF{{\mathcal F}}
\def\cs{{\mathcal S}}
\def\ct{{\mathcal T}}
\def\cC{{\mathcal C}}
\def\ci{{\mathcal I}}
\def\cc{{\mathcal C}}
\def\cg{{\mathcal G}}
\def\ca{{\mathcal A}}
\def\ch{{\mathcal H}}
\def\ck{{\mathcal K}}
\def\cl{{\mathcal L}}
\def\cm{{\mathcal M}}
\def\cN{{\mathcal N}}
\def\cR{{\mathcal R}}
\def\cv{{\mathcal V}}
\def\cb{\mathcal{B}}
\def\R{\mathbb{R}}
\def\Q{\mathbb{Q}}
\def\A{\mathbb{A}}
\def\Z{\mathbb{Z}}
\def\N{\mathbb{N}}
\def\T{\mathbb{T}}
\def\Q{\mathbb{Q}}
\def\smallskip{\par\vspace{1mm}}
\def\medskip{\par\vspace{2mm}}
\def\bigskip{\par\vspace{3mm}}
\def\thenumber{0}
\def\eq#1{\global\advance\equationcount by 1
   \def\thenumber{\number\equationcount}
                        {$$#1\eqno(\thenumber)$$}}
\tikzset{
xmin/.store in=\xmin, xmin/.default=-1.5, xmin=-1.5,
xmax/.store in=\xmax, xmax/.default=7.5, xmax=7.55,
ymin/.store in=\ymin, ymin/.default=-0.75, ymin=-0.75,
ymax/.store in=\ymax, ymax/.default=3.25, ymax=3.25,
}
\begin{document}

\title[Denjoy sub-systems ]{Denjoy sub-systems and Horseshoes}

\author{Marie-Claude Arnaud$^{\dag,\ddag,*}$ }

\email{Marie-Claude.Arnaud@imj-prg.fr}

\date{}

\keywords{  	Smooth mappings and diffeomorphisms, Symbolic Dynamics, 	Homoclinic and heteroclinic orbits,  	Hyperbolic orbits and sets, Twist maps, Rotation numbers and vectors.}

\subjclass[2010]{37C05, 37B10, 37C29, 37D05, 37E40,  	37E45   	 }

\thanks{$\dag$ Universit\'e de Paris,  CNRS, Institut de Math\'ematiques de Jussieu-Paris Rive Gauche\\ F-75013 Paris, France} 
\thanks{$\ddag$ member of the {\sl Institut universitaire de France.}}
\thanks{$*$This material is based upon work supported by the National Science Foundation under Grant No. DMS-1440140 while the author was in residence at the Mathematical Sciences Research Institute in Berkeley, California, during the [Fall] [2018] semester.}

\begin{abstract}  We introduce a notion of weak Denjoy sub-system (WDS) that generalizes the Aubry-Mather Cantor sets to diffeomorphisms of manifolds. We explain how a rotation number can be associated to such a WDS. Then we build in any horseshoe
 a continuous one parameter family of such WDS that is indexed by its rotation number. Looking at the inverse problem in the setting of Aubry-Mather theory, we also prove that for a generic conservative twist map of the annulus, the majority of the Aubry-Mather sets are contained in some horseshoe that is associated to a Aubry-Mather set with a rational rotation number.
  \end{abstract}

\maketitle

\section{Introduction and Main Results.}\label{SecIntro}

All the dynamicists know famous Poincar\'e sentence about periodic orbits.\\\

{\em Ce qui nous rend ces solutions p\'eriodiques si pr\'ecieuses, c'est qu'elles sont, pour ainsi dire, la seule br\`eche par o\`u nous puissions essayer de p\'en\'etrer dans une place jusqu'ici r\'eput\'ee inabordable.}\\

But  a periodic orbit for a  dynamical system $f:X\rightarrow X$   is simply a finite invariant subset and the Dynamics restricted to this set cannot be very complicated. What is more interesting is the Dynamics close to such a periodic orbit, that may give rise to various rich phenomena. For example, for a symplectic diffeomorphism of a surface, two kinds of  restricted Dynamics to invariant Cantor sets can exist close to the periodic orbits, that are
\begin{itemize}
\item horseshoes\footnote{We will define them precisely later in the article.} close to hyperbolic periodic points (see \cite{Sma1965}); since the work of Katok in \cite{Kat1980}, they are known to be the evidence of positive topological entropy. Moreover, they contain a dense set of periodic points;
\item aperiodic Aubry-Mather sets close to elliptic periodic points (see \cite{ALD1983}, \cite{Chen1985}, \cite{Mat1982}); they are known to have zero topological entropy and contain no periodic points. \end{itemize}
Although we will later focus on some specific horseshoes, we give here a general definition of horseshoe.

\begin{defi}\label{Defhs} Let $f:M\rightarrow M$ be a surface diffeomorphism. A {\em horseshoe for $f$} is a $f$-invariant subset $H\subset M$ %endowed with the restricted Dynamics $f_{|H}$ 
such that the Dynamics $f_{|H}$ is $C^0$ conjugate to the one of a non-trivial transitive subshift with finite type. A horseshoe for $f$ is a {\em $\sigma_2$-horseshoe} when the Dynamics $f_{|H}$ is $C^0$ conjugate to the shift with two symbols.
\end{defi}
\begin{exa}
The first horseshoe was introduced by S.~Smale in \cite{Sma1965} close to a transversal homoclinic intersection of a hyperbolic periodic point. 
This horseshoe is hyperbolic. Burns and Weiss extended this in \cite{BurnWei1995} to  the case of topologically transversal homoclinic intersection. Le Calvez and Tal use purely topological horseshoes for 2-dimensional homeomorphisms in \cite{LeCTal2018}.
\end{exa}

The  category of aperiodic Aubry-Mather set was recently extended in \cite{ALC2017} to the notion of so-called {Denjoy sub-system} by P. Le Calvez and the author.  We recall the definition given in \cite{ALC2017}.
 
\begin{defi}
Let $f:M\rightarrow M$ be a $C^k$ diffeomorphism of a manifold $M$. A {\em $C^k$ (resp. Lipschitz)  Denjoy sub-system  for $f$} is a triplet $(K, \gamma, h)$ where
\begin{itemize}
\item $\gamma: \T\rightarrow M$ is a $C^k$ (resp. biLipschitz) embedding;
\item $h:\T\rightarrow \T$ is a Denjoy example with invariant compact minimal set $K\subset \T$;
\item $f(\gamma(K))=\gamma(K)$;
\item $\gamma\circ h_{\vert K}=f\circ\gamma_{\vert K}$.
\end{itemize}
\end{defi}

\begin{remks}
\begin{itemize}
\item In this definition, $\gamma(\T)$ is not necessarily invariant.
\item Observe the importance of $\gamma$ to fix the regularity of $\gamma(K)$. 
\item For $k=0$, what we call a $C^0$-diffeomorphism is in fact a homeomorphism and in this case we just require that $\gamma$ is a continuous embedding.
\item The  embedding is also  useful to define a circular order on the Cantor set $\gamma(K)$. 
\end{itemize}
\end{remks}

\begin{exa}
There exists different notions of Aubry-Mather sets for the exact symplectic twist maps of the annulus (see \cite{ALD1983} and \cite{Mat1982}). We will follow \cite{Ban1988}  and for us, an Aubry-Mather set is a totally ordered compact set that contains only minimizing orbits in a variational setting, see e.g. \cite{ALD1983}, \cite{Ban1988}. Let us recall some results that are contained in \cite{Ban1988} and \cite{Arna 2016} and that we will use. We fix an exact  symplectic twist map $f$ of the infinite annulus and a lift $F:\R^2\rightarrow \R^2$. Then
\begin{itemize}
\item every Aubry-Mather set is a partial Lipschitz graph;
\item every Aubry-Mather  set $\ca$ has a rotation number $\rho(\ca)\in\R$;
\item for every $r\in\R\backslash \Q$, there exists a unique maximal (for $\subset$) Aubry-Mather set $\ca_r$ with rotation number $r$ that contains every Aubry-Mather set with the same rotation number;
\item for every $r=\frac{p}{q}\in \Q$, there exist two  Aubry-Mather set $\ca_r^\pm$ with rotation number $r$ that  are maximal (for $\subset$) among the Aubry-Mather sets with the same rotation number.  They are are such that: $\forall x\in \tilde\ca_r^+, \pi_1\circ F^q(x)\geq \pi_1(x)+p$ (resp. $\forall x\in \tilde\ca_r^-, \pi_1\circ F^q(x)\leq \pi_1(x)+p$) where $\pi_1:\R^2\rightarrow \R$ is the first projection;
\item if $(\ca_n)$ is a sequence of Aubry-Mather sets such that the sequence of rotation numbers $(\rho(\ca_n))$ converges to some $r\in\R$, 
then $\bigcup_{n\in\N}\ca_n$ is relatively compact and any limit point of $(\ca_n)$ for the Hausdorff distance is an Aubry-Mather set with rotation number $r$.
\end{itemize}
The Aubry-Mather sets $\ca_r$ that have an irrational rotation number and that are not a complete graphs always contain a Lipschitz Denjoy sub-system $\cc_r$.
\end{exa}
We noticed that an important advantage of $\gamma$ is to define a circular order along $\gamma(K)$. But to do that, we only need the embedding restricted to $K$. That is why we introduce now a new notion, the one of {\em weak Denjoy sub-system} that extends the one of Denjoy sub-system. This notion is similar to the one of Denjoy set that was introduced by J.~Mather in \cite{Mat1985}.

\begin{defi}
Let $f:M\rightarrow M$ be a homeomorphism of a manifold $M$. A {\em weak Denjoy sub-system  for $f$} (in short WDS) is a triplet $(K, j, h)$ where
\begin{itemize}

\item $h:\T\rightarrow \T$ is a Denjoy example with invariant minimal set $K\subset \T$;
\item $j: K\rightarrow M$ is a homeomorphism onto its image;
\item $f(j(K))=j(K)$;
\item $j\circ h_{\vert K}=f\circ j$.
\end{itemize}
When $j$ is biLipschitz or a $C^k$ embedding (in the Whitney sense), we speak of Lipschitz or $C^k$ weak Denjoy sub-system  for $f$.\\
Two WDS $(K_1, j_1, h_1)$ and $(K_2, j_2, h_2)$ are {\em equivalent} if $j_1(K_1)=j_2(K_2)$.
\end{defi}

The restriction of a Denjoy sub-system to its non-wandering  set is always a WDS. On a surface, we have the reverse implication.

\begin{proposition}\label{TweakstrongDenjoy}
Let $(K, j, h)$ be a WDS of a surface homeomorphism. Then there exists a $C^0$ Denjoy sub-system $(K, \gamma, h)$ such that $\gamma_{|K}=j$.
\end{proposition}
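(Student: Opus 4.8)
The plan is to extend the homeomorphism $j:K\to M$ (onto its image) to a continuous embedding $\gamma:\T\to M$ of the whole circle, in a way that intertwines $h$ and $f$ on $K$; the extra points of $\T\setminus K$ sit inside the ``gaps'' (the wandering intervals and their endpoints) of the Denjoy minimal set, and we must send the gaps of $\T$ to arcs in $M$ that fill in $j(K)$ into a topological circle. First I would recall the structure of a Denjoy example: $\T\setminus K$ is a countable union of open wandering intervals $\{I_n\}_{n\in\Z}$ permuted by $h$; picking one such interval $I_0$, its $h$-orbit $\{I_n=h^n(I_0)\}$ consists of pairwise disjoint intervals, and $K$ together with these intervals recovers $\T$. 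Since $j$ is a homeomorphism onto its image and the circular order on $K$ is inherited from $\T$, the set $j(K)$ is a Cantor set in $M$ carrying the corresponding circular order, and $f$ acts on it as the circular-order-preserving homeomorphism conjugate (via $j$) to $h_{|K}$.

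The key step is to realize each gap of $j(K)$ as the image of an embedded arc in $M$ and to do so consistently with the dynamics. For the distinguished gap of $j(K)$ corresponding to $I_0$, let $a,b\in K$ be the two endpoints of $I_0$; then $j(a),j(b)\in M$ are two points of $j(K)$ that bound this gap in the circular order. Because $M$ is a surface (indeed, any manifold of dimension $\ge 1$) one can choose an arc $\alpha_0:\overline{I_0}\to M$ with $\alpha_0$ an embedding, $\alpha_0$ agreeing with $j$ at the endpoints $a,b$, and with the interior of $\alpha_0(\overline{I_0})$ disjoint from $j(K)$ and from the (to-be-constructed) images of the other gaps; this disjointness is where the surface (or at least $\dim M\ge 2$, or a careful argument in $\dim M=1$) hypothesis is used, and one has freedom because $j(K)$ is totally disconnected and the relevant arcs can be taken very short. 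Then I would propagate: define $\alpha_n:=f^n\circ\alpha_0\circ (h^n_{|\overline{I_0}})^{-1}$ on $\overline{I_n}$, which automatically makes the extension $f$-equivariant on $\bigcup_n \overline{I_n}$ and matches $j$ on $K\cap \bigcup_n\overline{I_n}$. Finally, glue: set $\gamma=j$ on $K$ and $\gamma=\alpha_n$ on each $\overline{I_n}$; one checks these agree on the overlap (the endpoints, which lie in $K$).

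The main obstacle — and the part requiring care rather than a one-line argument — is continuity of $\gamma$ at the points of $K$ and injectivity of $\gamma$ globally. Continuity fails unless the diameters of the arcs $\alpha_n(\overline{I_n})$ tend to $0$; since $f^n$ expands or contracts uncontrollably, one cannot simply transport $\alpha_0$ by $f^n$ and hope for small images. The fix is the standard Denjoy-type trick: choose, once and for all, a summable family of positive lengths $\{\ell_n\}$ with $\sum_n \ell_n<\infty$ indexed by the gaps, and construct each $\alpha_n$ directly (not as an iterate) inside a small ball around the gap so that $\operatorname{diam}\alpha_n(\overline{I_n})\le \ell_n$; then re-impose equivariance only on one chosen orbit of gaps if one wants $\gamma\circ h=f\circ\gamma$ to hold exactly, or — cleaner — observe that the Denjoy example itself is built with wandering intervals of summable length, and transport a single arc along the orbit while simultaneously shrinking it using a partition-of-unity or a chosen isotopy so that the image diameters are controlled by the (summable) domain lengths. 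With diameters summable, the map $\gamma$ is uniformly continuous on $\bigcup_n\overline{I_n}$, extends continuously to $\T$, agrees with $j$ on $K$, and is injective because distinct gaps map to disjoint arcs and $j$ is injective on $K$; compactness of $\T$ then makes $\gamma$ a homeomorphism onto its image, i.e.\ a $C^0$ embedding, so $(K,\gamma,h)$ is the desired $C^0$ Denjoy sub-system.
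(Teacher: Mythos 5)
Your overall strategy --- extend $j$ to a circle embedding by filling each gap of $j(K)$ with an arc, then get continuity from shrinking diameters and injectivity from disjointness --- is a genuinely different route from the paper's, which instead places $j(K)$ inside an annulus, transfers to $\R^2\setminus\{(0,0)\}$, and invokes the classical theorem (Moise, Chapter 13) that a homeomorphism between two Cantor subsets of $\R^2$ extends to an ambient homeomorphism of $\R^2$; restricting that ambient homeomorphism to $\T\subset\R^2$ produces $\gamma$ in one stroke. The problem is that the central step of your construction is asserted rather than proved. You need an infinite family of arcs, one per gap, pairwise disjoint, each joining the two prescribed points $j(a_n),j(b_n)$, each meeting $j(K)$ only at those endpoints, with diameters tending to $0$; the only justification offered is that ``$j(K)$ is totally disconnected and the relevant arcs can be taken very short.'' That assertion is precisely the content of the tameness of Cantor sets in surfaces, i.e.\ of the extension theorem the paper cites. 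It is not a soft fact: it already presupposes that every point of $j(K)$ is accessible from the complement, and that the abstract circular order pulled back from $K\subset\T$ is realizable by an embedded circle through $j(K)$ in $M$; the analogous statement fails in $\R^3$ (Antoine's necklace --- see the paper's footnote pointing to Theorem 5 of Chapter 18 of Moise), which shows the disjoint-arc system cannot be produced by general position alone. So as written the proposal assumes the key point; to close it you must either prove the taming statement for planar Cantor sets or cite it, at which point the paper's formulation gives $\gamma$ directly.

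Two smaller remarks. The discussion of equivariance of the arcs (defining $\alpha_n=f^n\circ\alpha_0\circ(h^n_{|\overline{I_0}})^{-1}$ and then retreating from it) is a red herring: a $C^0$ Denjoy sub-system only requires $\gamma\circ h_{\vert K}=f\circ\gamma_{\vert K}$, which holds automatically because $\gamma_{|K}=j$ and $(K,j,h)$ is a WDS; $\gamma(\T)$ need not be $f$-invariant, as the paper notes explicitly. On the positive side, your closing reduction --- pairwise disjoint arcs with diameters tending to $0$, glued to $j$, give a continuous injective map from the compact space $\T$ into $M$, hence an embedding --- is correct, so the arc construction really is the only missing piece.
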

\begin{remks}
\begin{itemize}
\item This result is specific to the case of surfaces because it uses a classical result on extension of homeomorphisms between Cantor sets of surfaces,.
\item We don't know about such a result with more regularity: Lipschitz, $C^1$ (a kind of Whitney extension theorem for diffeomorphisms). Observe that we proved in \cite{ALC2017} that there exists no $C^2$ Denjoy sub-system.
\end{itemize}
\end{remks}

 \begin{remk}
Let us recall that a  {\em circular order relation} on a set $X$ is a relation $\prec$ that is defined on the triplets of points of $X$ such that
\begin{itemize}
\item if $x, y, z\in X$, we have  $x\prec y\prec z$ or $z\prec y\prec x$; we use the notation $[x, z]_\prec=\{ y\in X; x\prec y\prec z\}$;
\item the two previous lines of inequalities are simultaneously satisfied if and only if $x=y$ or $y=z$;
\item if $x\prec y\prec z$, then $y\prec z\prec x$;
\item if $x\prec y\prec z$ and  $x\prec z\prec t$ then $x\prec y\prec t$.
\end{itemize}
If $\prec$ is a circular order on $X$, the inverse order $-\prec$ is defined by
$$\forall x, y, z\in X, x(-\prec) y(-\prec) z\Leftrightarrow z\prec y\prec x.$$
 \end{remk}
 \begin{notas} \begin{itemize}
 \item If $(K, j, h)$ is a WDS, we denote by $\prec_K$ the circular order on $j(K)$ that is deduced from the one of $K\subset \T$ via the map $j$.
\item The {\em graph $\cg(\prec_K)$} of this order relation is the set of the triplets $(a, b, c)\in(j(K))^3$ such that $a$, $b$ and $c$ are in this order along $j(K)$. This graph $\cg(\prec_K)$ is then a closed part of $(j(K))^3$ and then of $(M)^3$.\\
Observe that for every $a, c\in j(K)$, $\cg(\prec_K, a, c)=\{ b\in j(K); (a, b, c)\in \cg(\prec_K)\}$ is a non-empty compact subset of $M$, called an {\em interval} of $\cg(\prec_K)$. \end{itemize}
 \end{notas}
 
 \begin{remk}
 We have $\cg(\prec_K, a, a)=j(K)$ and for $a\not=c$, $\cg(\prec_K, a, c)$ contains at least $a$ and $c$. Moreover, we have $\cg(\prec_K, a, c)=\{ a, c\}$ if and only if $\{ a, c\}$ is one gap\footnote{Observe that in this case, $a$ and $c$ are $\alpha$ and $\omega$-asymptotic under the Dynamics.} of the Cantor est.

 \end{remk}

The first theorem we will prove allows us to extend Poincar\'e's notion of rotation number to WDS, or more precisely to the classes of equivalence of WDS.

\begin{theorem}\label{Trotasubsytem} Let $(K_1, j_1, h_1)$ and $(K_2, j_2, h_2)$ be two equivalent WDS for a same homeomorphism  $f:M\rightarrow M$ of a manifold $M$. Then
\begin{itemize}
\item there exists a homeomorphism $h:\T\rightarrow \T$ such that $h\circ h_1=h_2\circ h$.
%\item if we denote by $\rho(h_i)$ the rotation number of $h_i$, we have $\rho(h_1)=\pm \rho(h_2)$;
\item %if we denote by $\prec_{K_i}$ the circular order (for triplets of points) on $j_i(K_i)$ that is deduced from the one of $K_i\subset \T$ via the map $j_i$, then 
we have $\prec_{K_1}=\prec_{K_2}$ or $\prec_{K_1}=-\prec_{K_2}$, hence the two orders have the same intervals.
\end{itemize} 

%Let $(K_1, \gamma_1, h_1)$ and $(K_2, \gamma_2, h_2)$ be two equivalent Denjoy sub-systems for a same homeomorphism  $f:M\rightarrow M$ of a manifold $M$. Then there exists a homeomorphism $h:\T\rightarrow \T$ such that $h\circ h_1=h_2\circ h$.

\end{theorem}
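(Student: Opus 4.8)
The plan is to work entirely on the circle, transporting everything through the maps $j_1$ and $j_2$. Set $C = j_1(K_1) = j_2(K_2) \subset M$; this common Cantor set carries the two circular orders $\prec_{K_1}$ and $\prec_{K_2}$, and it is $f$-invariant with $f_{|C}$ conjugate (via $j_i$) to $h_{i|K_i}$. The key observation is that the map $\phi := j_2^{-1} \circ j_1 : K_1 \to K_2$ is a homeomorphism between the two minimal Cantor sets that intertwines the dynamics: $\phi \circ h_{1|K_1} = h_{2|K_2} \circ \phi$. So the whole problem reduces to a purely one-dimensional question: given two Denjoy examples $h_1, h_2$ with minimal sets $K_1, K_2$ and a topological conjugacy $\phi: K_1 \to K_2$ between $h_{1|K_1}$ and $h_{2|K_2}$, show that $\phi$ either preserves or reverses the cyclic order of the circle, and that it extends to a circle homeomorphism $h$ conjugating $h_1$ to $h_2$ globally.

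For the order statement, I would argue as follows. A homeomorphism $\phi$ between two Cantor subsets of $\T$ need not a priori respect the cyclic order, but here $\phi$ is dynamically constrained. The standard fact about a Denjoy example is that $h_i$ is semi-conjugate to an irrational rotation $R_{\rho_i}$ by a monotone degree-one map $\pi_i : \T \to \T$ that is injective on $K_i$ (collapsing exactly the closures of the wandering gaps); thus $\pi_i$ restricted to $K_i$ is an order-preserving (cyclic) bijection onto $\T$ intertwining $h_{i|K_i}$ with $R_{\rho_i}$. Composing, $\psi := \pi_2 \circ \phi \circ \pi_1^{-1} : \T \to \T$ is a bijection (hence automatically, being a self-bijection of $\T$ coming from compatible structures—here I should be careful) conjugating $R_{\rho_1}$ to $R_{\rho_2}$. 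A bijection of $\T$ conjugating two irrational rotations must be continuous: the cleanest route is to note that $\psi$ sends the orbit $\{n\rho_1\}$ to $\{n\rho_2\}$ preserving the combinatorial/order structure of these dense orbits (the ordering of a rotation orbit is rigid, governed by the continued fraction expansion), which forces $\rho_2 = \pm \rho_1 \bmod 1$ and $\psi = \pm \mathrm{id} + c$ for some constant $c$; in particular $\psi$ is an orientation-preserving or orientation-reversing homeomorphism. Pulling back through the $\pi_i$, this shows $\phi$ preserves or reverses the cyclic order on the Cantor sets, i.e. $\prec_{K_1} = \prec_{K_2}$ or $\prec_{K_1} = -\prec_{K_2}$ (after transporting via $j_1, j_2$ to $C$). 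Since the intervals $\cg(\prec_K, a, c)$ are defined symmetrically in the two endpoints, reversing the order leaves the family of intervals unchanged, giving the last assertion.

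For the existence of the global conjugacy $h$, once we know $\phi = j_2^{-1}\circ j_1$ is order-preserving (replacing $h_2$ by its inverse / reflecting if necessary reduces to this case) and intertwines $h_{1|K_1}$ and $h_{2|K_2}$, we extend it to a homeomorphism $h : \T \to \T$ by declaring $h$ on each complementary gap of $K_1$ to be the (unique up to the obvious choices) orientation-preserving homeomorphism onto the corresponding gap of $K_2$—the correspondence of gaps being dictated by $\phi$ and by the dynamics, since $h_i$ permutes its gaps freely and the "first" gap of each orbit can be matched arbitrarily and then propagated by the relation $h \circ h_1 = h_2 \circ h$. One checks that the resulting $h$ is a well-defined bijection, monotone of degree one (it is monotone on $K_1$ and monotone on each gap, hence monotone), hence a homeomorphism, and that by construction $h \circ h_1 = h_2 \circ h$ holds on $K_1$ and is then forced to hold everywhere by the way we defined $h$ on gaps via the dynamics.

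The main obstacle I expect is the rigidity step: showing that a set-theoretic bijection $\psi$ of $\T$ conjugating the rotation $R_{\rho_1}$ to $R_{\rho_2}$ is necessarily continuous and of the form $\pm\mathrm{id}+c$. This requires using the fact that the cyclic order of a single (dense) rotation orbit, together with its image, is preserved—because $\phi$ is a homeomorphism of Cantor sets and $\pi_i$ are order-isomorphisms on $K_i$, so $\psi$ is actually order-preserving or order-reversing on the dense orbit—and then a density/monotonicity argument promotes this to a homeomorphism of $\T$; the identification of the two rotation numbers (up to sign) then follows since the only homeomorphisms conjugating rotations are the affine ones. Handling the orientation-reversing case carefully (where $\rho_2 = -\rho_1$ and $h_2$ is conjugate to $h_1^{-1}$ on the Cantor set) is the place where the "$\prec_{K_1} = -\prec_{K_2}$" alternative genuinely occurs, and I would make sure the gap-matching in the extension step respects that.
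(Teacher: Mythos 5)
Your overall strategy coincides with the paper's: both proofs collapse the gaps (the paper via the quotient by the asymptotic relation $\cR_f$ on $C=j_1(K_1)=j_2(K_2)$, you via the semi-conjugacies $\pi_i$ to the rotations $R_{\rho_i}$), obtain a self-map $\psi$ of $\T$ conjugating $R_{\rho_1}$ to $R_{\rho_2}$, and conclude $\psi=\pm\,\mathrm{id}+c$, from which both bullets follow. The one genuinely different piece is your treatment of the first bullet: the paper deduces that $h_1$ and $h_2$ are conjugate from the Markley--Herman characterization (two Denjoy examples are conjugate iff the images under the semi-conjugacies of their gap sets differ by a translation or a symmetry), whereas you build $h$ by hand, matching gaps orbit by orbit and propagating by $h=h_2^{-n}\circ h\circ h_1^{n}$. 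That construction is sound once order-preservation of $\phi=j_2^{-1}\circ j_1$ is established, and is arguably more self-contained than the citation.

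There is, however, a genuine gap at the crux, namely the monotonicity of $\psi$. Two of your stated justifications do not work. First, ``a bijection of $\T$ conjugating two irrational rotations must be continuous'' is false: using the axiom of choice one can equivariantly biject the orbits of any two irrational rotations, so set-theoretic conjugacy imposes no relation between $\rho_1$ and $\rho_2$ and no continuity. Second, ``$\phi$ is a homeomorphism of Cantor sets and $\pi_i$ are order-isomorphisms on $K_i$, so $\psi$ is order-preserving or order-reversing'' is a non sequitur: the whole point of the theorem is that a homeomorphism between Cantor subsets of $\T$ need not respect the cyclic order, as you yourself note at the outset. (Also, $\pi_i$ is not injective on $K_i$: it is $2$-to-$1$ on gap endpoints, so $\pi_1^{-1}$ is not single-valued where you use it.) The step that actually closes the argument, and is what the paper does, is this: $\phi$ preserves asymptotic pairs, hence gap-endpoint pairs, so it descends to a map between the quotients of $K_1$ and $K_2$ by the gap-endpoint identification; each quotient is homeomorphic to $\T$ via $\bar\pi_i$, so $\psi$ becomes a well-defined \emph{continuous} bijection of the compact space $\T$, hence a homeomorphism, hence monotone because $\T$ is a one-manifold; only then does the conjugation relation with an irrational rotation force $\psi=\pm\,\mathrm{id}+c$. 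With that substitution your proof goes through.
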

%PEUT-ON DONNER UN ENONCE ANALOGUE POUR LES SOUS-SYSTEMES CIRCULAIRE, IE AVEC NOMBRE DE ROTATION RATIONNEL? NON, car si pas connexion heterocline...

\begin{cor}\label{Crotasubsytem}
 The  map $\rho$ defined on the set of WDS with values in $\T/{x\sim -x}$ that associates to any WDS $(K, \gamma, h)$ the rotation number of $h$ modulo its sign is such that if $(K_1, \gamma_1, h_1)$ and $(K_2, \gamma_2, h_2)$ are equivalent, then $\rho(K_1, \gamma_1, h_1)=\rho(K_2, \gamma_2, h_2)$. 
\end{cor}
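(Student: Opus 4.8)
The plan is to deduce this directly from Theorem \ref{Trotasubsytem} together with the classical Poincar\'e fact that the rotation number of a circle homeomorphism is a conjugacy invariant, up to a change of sign coming from orientation-reversing conjugacies.

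Let $(K_1, j_1, h_1)$ and $(K_2, j_2, h_2)$ be equivalent WDS for $f$ (recall that $\rho$ depends only on the Denjoy example $h$, so the notational difference between $\gamma$ and $j$ is immaterial here). By the first item of Theorem \ref{Trotasubsytem} there is a homeomorphism $h:\T\to\T$ with $h\circ h_1=h_2\circ h$, that is, $h_2 = h\circ h_1\circ h^{-1}$. Being a self-homeomorphism of the circle, $h$ is either orientation-preserving or orientation-reversing. If $h$ is orientation-preserving, fix a lift $\tilde h$ of $h$ commuting with the unit translation and a lift $\tilde h_1$ of $h_1$; then $\tilde h\circ\tilde h_1\circ\tilde h^{-1}$ is a lift of $h_2$, and since $\tilde h-\mathrm{id}$ is bounded (it is $1$-periodic), for any $x$, writing $y=\tilde h^{-1}(x)$, one has $\lim_{n\to\infty}\frac1n\big(\tilde h\tilde h_1^n\tilde h^{-1}(x)-x\big)=\lim_{n\to\infty}\frac1n\big(\tilde h_1^n(y)-y\big)=\rho(h_1)$; hence $\rho(h_2)=\rho(h_1)$ in $\T$. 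If $h$ is orientation-reversing, write $h=R\circ g$ with $R(x)=-x$ and $g$ orientation-preserving; by the previous case $\rho(g h_1 g^{-1})=\rho(h_1)$, and the same lift computation using the lift $x\mapsto -x$ of $R$ gives $\rho(R\varphi R^{-1})=-\rho(\varphi)$ for every orientation-preserving homeomorphism $\varphi$, so $\rho(h_2)=-\rho(h_1)$ in $\T$.

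In either case $\rho(h_2)=\pm\rho(h_1)$, so $\rho(h_1)$ and $\rho(h_2)$ have the same image in $\T/\{x\sim -x\}$, which is precisely the claimed equality $\rho(K_1,j_1,h_1)=\rho(K_2,j_2,h_2)$. There is no genuine obstacle in this corollary: all the work lies in producing the conjugating homeomorphism $h$, which is done in Theorem \ref{Trotasubsytem}; the only subtlety worth flagging is the sign reversal under an orientation-reversing conjugacy, and this is exactly what forces the target of $\rho$ to be the quotient $\T/\{x\sim -x\}$ rather than $\T$ itself.
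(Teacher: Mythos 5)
Your proof is correct, but it is organized differently from the paper's. You take the first item of Theorem \ref{Trotasubsytem} (the existence of a homeomorphism $h$ conjugating $h_1$ to $h_2$) as a black box and then apply the classical lift computation showing that the rotation number is preserved by orientation-preserving conjugacies and negated by orientation-reversing ones; this is a legitimate and clean deduction, and your lift argument is carried out correctly. The paper, by contrast, establishes the corollary \emph{before} the first point of the theorem is fully proved: it passes to the quotient $C/\cR_f$ by the $\omega$-asymptotic relation, identifies this quotient with $\T$ via the semi-conjugations $k_i$ of the $h_i$ with rotations $R_{a_i}$, and obtains directly that $R_{a_1}=(\ell_1\circ\ell_2^{-1})\circ R_{a_2}\circ(\ell_1\circ\ell_2^{-1})^{-1}$, whence $a_1=\pm a_2$ because conjugate rotations have equal angles up to sign. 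The paper explicitly remarks that this already gives the corollary but not yet the first point of the theorem, whose completion requires the further analysis of gap endpoints and Markley's characterization of conjugate Denjoy examples. So your route leans on the hardest part of the theorem, while the paper's shows the corollary needs only the easier quotient-rotation step; on the other hand, your argument makes the dependence on the stated theorem transparent and spells out the sign bookkeeping that justifies taking values in $\T/\{x\sim -x\}$. Both arguments are valid.
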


%MONTRER QUE CANTOR INVARIANT AVEC ORDRE CIRCULAIRE FERME DONNE SOUS-SYSTEME FAIBLE DE DENJOY?????

Let us endow the set of WDS with a topology that focus on their order relation.

\begin{defi}
Let $f:M\rightarrow M$ be a homeomorphism of a manifold $M$. Let $(K, j, h)$ be a weak Denjoy sub-system for $f$. Another weak Denjoy sub-system $(K_1, j_1, h_1)$ is close to $(K, j, h)$ if
\begin{itemize}
\item $j_1(K_1)$ and $j(K)$ are close to each other for the Hausdorff distance on compact subsets of $M$;
\item $\prec_{K_1}$ or $-\prec_{K_1}$ is close to $\prec_K$ in the following sense. The graph $\cg (\prec_K)$ and the graph $\cg(\prec_{K_1})$ (or $\cg(-\prec_{K_1})$) are close from each other for the Hausdorff topology on the compact subsets of $(M)^3$.
\end{itemize}
\end{defi}
 
\begin{proposition}\label{Prota}
The map that associates to every WDS its rotation number is continuous.
\end{proposition}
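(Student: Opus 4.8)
The plan is to show that if a sequence of WDS $(K_n, j_n, h_n)$ converges to $(K, j, h)$ in the sense of the preceding definition, then, after possibly replacing $h_n$ by $h_n^{-1}$ (which changes $\rho$ by a sign and hence not its class in $\T/{x\sim-x}$), the rotation numbers $\rho(h_n)$ converge to $\rho(h)$. Since $\rho$ takes values in the compact metrizable space $\T/{x\sim-x}$, it suffices to prove that every convergent subsequence of $(\rho(h_n))$ has limit $\rho(h)$, so we may assume $\rho(h_n)\to r$ for some $r\in\T/{x\sim-x}$ and must show $r=\rho(h)$.

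First I would recall the intrinsic characterization of the rotation number that only uses the circular order: for a WDS, $\rho$ is determined by the combinatorics of the orbit of any point in the Cantor set under the induced cyclic order. Concretely, fix $a\in j(K)$; for each integer $m$ the finite ordered configuration of the points $a, f(a), \dots, f^m(a)$ along $j(K)$ — i.e. which of them lie in which interval $\cg(\prec_K, f^i(a), f^{i+1}(a))$ — determines, via the usual rotation-number counting for circle orbits, the value $\rho(h)$ up to an error $O(1/m)$; and this data is read off entirely from the graph $\cg(\prec_K)$ together with the points $f^i(a)$. The key technical point is a \emph{uniformity} statement: given $\varepsilon>0$, there is $m_0$ such that for \emph{every} WDS whose order graph contains a suitable finite configuration, the first $m_0$ iterates already pin down $\rho$ within $\varepsilon$. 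This is where Theorem \ref{Trotasubsytem} (or rather its proof) is used: the semiconjugacy between $h_1$ and $h_2$ for equivalent WDS is built purely from the equality of circular orders, so the rotation number is a function of finitely much order-combinatorial data, with controlled rate.

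Next I would transfer this to the sequence. Pick $a\in j(K)$ and, using Hausdorff convergence of $j_n(K_n)$ to $j(K)$, pick $a_n\in j_n(K_n)$ with $a_n\to a$; then $f^i(a_n)\to f^i(a)$ for each fixed $i$ by continuity of $f$. Hausdorff convergence of the graphs $\cg(\prec_{K_n})\to\cg(\prec_K)$ in $M^3$ implies that for $n$ large the finite ordered configuration of $a_n, f(a_n),\dots,f^{m_0}(a_n)$ along $j_n(K_n)$ coincides with that of $a, f(a),\dots,f^{m_0}(a)$ along $j(K)$ — here one uses that the relevant configuration for $j(K)$ is ``stable'' (the iterates fall in the interiors of the intervals they determine, which holds for all but finitely many choices of $m_0$, and can be arranged since $K$ is a Cantor minimal set with no periodic points). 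Consequently $|\rho(h_n)-\rho(h)|<2\varepsilon$ for $n$ large, modulo sign, giving $r=\rho(h)$.

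The main obstacle I anticipate is the uniformity/stability step: making rigorous that a bounded amount of circular-order data determines the rotation number with a uniform error bound across the whole family of WDS, and that Hausdorff convergence of the order graphs in $M^3$ actually transfers this finite combinatorial data (the subtlety being that Hausdorff closeness of graphs must be combined with the separation of the finitely many points $f^i(a)$ in $M$, which requires knowing these points are distinct — true since a Denjoy minimal set carries an aperiodic orbit). Handling the degenerate choices of $a$ or $m_0$ where an iterate lands on an endpoint of an interval is a routine but necessary annoyance, dealt with by perturbing $m_0$ or $a$ within $j(K)$.
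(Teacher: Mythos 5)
Your proof is correct, but it takes a genuinely different route from the paper's. The paper fixes a gap $(x_1,x_0)$ of $j(K)$, encodes orbits by their itinerary with respect to the two-interval partition determined by that gap, recognizes the itinerary of $x_0$ as the Sturmian sequence of the rotation $R_{\rho(h)}$, locates a gap $\{z_1,z_0\}$ of the nearby WDS by a $\sup/\inf$ argument inside a small order-interval, and concludes because two Sturmian sequences agreeing on a long central block have rotation numbers with the same initial continued-fraction coefficients. You instead work directly with the cyclic order type of a finite orbit segment $a,f(a),\dots,f^m(a)$: by Poincar\'e's semiconjugacy this order type equals that of $0,\alpha,\dots,m\alpha$ with $\alpha=\rho(h)$, hence confines $\alpha$ to a Farey interval of length $O(1/m)$, and you transfer this finite combinatorial data to nearby WDS via Hausdorff convergence of the order graphs. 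Your route avoids both the Sturmian/continued-fraction machinery and the delicate step of exhibiting a gap of the perturbed system; the paper's coding has the advantage of being reused in the proof of Theorem \ref{WDSSinHS}. Two points to tighten. First, the transfer of the order type is cleaner than your ``interiors'' heuristic: the points $f^i(a)$, $0\le i\le m$, are pairwise distinct (a Denjoy minimal set carries no periodic orbit), so for each triple exactly one of the two cyclic orders holds; if a triple $(f^i(a_n),f^j(a_n),f^k(a_n))$ carried the opposite order for infinitely many $n$, Hausdorff closeness of $\cg(\prec_{K_n})$ to $\cg(\prec_K)$ together with closedness of $\cg(\prec_K)$ in $M^3$ would place both cyclic orders of $(f^i(a),f^j(a),f^k(a))$ in $\cg(\prec_K)$, forcing two of these points to coincide --- so no perturbation of $a$ or $m_0$ is ever needed. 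Second, the normalization should be phrased as reversing the circular order of the $n$-th system (replacing the WDS by its mirror image), not as replacing $h_n$ by $h_n^{-1}$, which changes the sign of $\rho(h_n)$ but not the order $\prec_{K_n}$; since everything is read modulo $x\sim -x$ this is immaterial to the conclusion.
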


\begin{remk}
The previous result extends a result that is well-known in the setting of well-ordered sets for twist maps.
\end{remk}

Horseshoes and WDS are different but in general, it is believed that, up to some entropy restriction, horseshoes Dynamics contain every Dynamics (via symbolic Dynamics)\footnote{This is not completely correct because, for example, the dynamics of an odomoter cannot be embedded in a horseshoe even if it has zero entropy~: it is an isometry and the horseshoe is expansive.}.  

We will prove that for every horseshoe contains many WDS, and even a continuous 1-parameter family $(D_\rho)$ continuously depending on its rotationn number $\rho$ where $\rho$ is in a non-reduced to a point interval of $\T/x\sim -x$ of irrational numbers. 

\begin{theorem}\label{WDSSinHS} Let $f:M\rightarrow M$ be a $C^k$ diffeomorphism and let $\ch$ be a horseshoe for $f$. Then there is a $N\ge 1$ and a continuous map $D: r\in (\T\backslash\Q)/{x\sim -x}\mapsto (K_r, j_r, h_r)$ such that 
\begin{itemize}
\item $D(r)=(K_r, j_r, h_r)$ is a continuous WDS with rotation number $r$ for $f^N$;
\item $j_r(K_r)\subset \ch$.
\end{itemize}
Moreover, if $\ch$ is a $\sigma_2$-horseshoe, we have $N=1$.
\end{theorem}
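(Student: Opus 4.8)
The plan is to pass to the symbolic model of the horseshoe, to realize there a continuous family of Denjoy minimal sets as Sturmian subshifts, and to transport everything back by the conjugacy defining $\ch$; the real work is then a continuity statement at the symbolic level.

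\emph{Symbolic reduction.} By Definition~\ref{Defhs} there is a homeomorphism $\Phi$ conjugating $f_{|\ch}$ to a non-trivial transitive subshift of finite type $(\Sigma_A,\sigma_A)$. It is classical that an infinite transitive SFT admits, for a suitable $N\ge 1$, an invariant subset on which $\sigma_A^{N}$ is conjugate to the full shift $(\Sigma_2,\sigma_2)$, $\Sigma_2=\{0,1\}^{\Z}$: in a $1$-step presentation the transition graph is strongly connected and is not a single cycle, so some vertex $v$ carries two distinct first-return loops $\ell_0,\ell_1$ of lengths $p\le q$; then $\ell_0^{q}$ and $\ell_1^{p}$ are two distinct words of the common length $N=pq$, both loops at $v$, and the block-aligned bi-infinite concatenations of $\ell_0^{q}$ and $\ell_1^{p}$ form a $\sigma_A^{N}$-invariant set on which $\sigma_A^{N}$ recodes to $\sigma_2$. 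When $\ch$ is a $\sigma_2$-horseshoe one takes $N=1$ and $\Sigma_A=\Sigma_2$. Composing, one obtains a homeomorphism $\Psi:\Sigma_2\to\ch$ onto a Cantor set $C=\Psi(\Sigma_2)$ that is $f^{N}$-invariant, with $\Psi\circ\sigma_2=f^{N}\circ\Psi$.

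\emph{Realizing the family and defining $D$.} Identify $(\T\backslash\Q)/{x\sim -x}$ with the irrationals of $(0,\tfrac12)$ and fix such an $\alpha$. Let $S_\alpha\subset\Sigma_2$ be the Sturmian subshift of slope $\alpha$, i.e.\ the closure in $\Sigma_2$ of the codings of the orbits of the rotation $R_\alpha$ with respect to $\{[0,1-\alpha),[1-\alpha,1)\}$: then $\sigma_2{}_{|S_\alpha}$ is minimal, $S_\alpha$ is a Cantor set, $\sigma_2{}_{|S_\alpha}$ has rotation number $\alpha$ and is an almost one-to-one extension of $R_\alpha$ whose blow-up set is a single $R_\alpha$-orbit; the intercept factor map $\pi_\alpha:S_\alpha\to\T$ equips $S_\alpha$ with an intrinsic circular order $\prec^{\mathrm S}_\alpha$ (the pull-back of the circular order of $\T$, resolved along the blow-up orbit by letting the left-limit coding precede the right-limit one). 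Pick any Denjoy example $h_\alpha:\T\to\T$ of rotation number $\alpha$ with minimal Cantor set $K_\alpha$ on which a single orbit of complementary arcs is blown up (classical Denjoy construction). Since $(h_\alpha)_{|K_\alpha}$ and $\sigma_2{}_{|S_\alpha}$ are both minimal almost one-to-one extensions of $R_\alpha$ with one blow-up orbit, they are topologically conjugate; any conjugacy intertwines their factor maps to $R_\alpha$ up to an element of the centralizer of $R_\alpha$ in $\mathrm{Homeo}(\T)$, i.e.\ a rotation, which is orientation-preserving, so a conjugacy $\psi_\alpha:K_\alpha\to S_\alpha$ carries the circular order that $K_\alpha\subset\T$ inherits from $\T$ exactly to $\prec^{\mathrm S}_\alpha$. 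Put $j_\alpha:=\Psi\circ\psi_\alpha:K_\alpha\to\ch$: it is a homeomorphism onto its image, $j_\alpha(K_\alpha)=\Psi(S_\alpha)\subset C\subset\ch$, $f^{N}(j_\alpha(K_\alpha))=\Psi(\sigma_2(S_\alpha))=j_\alpha(K_\alpha)$, and $j_\alpha\circ(h_\alpha)_{|K_\alpha}=\Psi\circ\sigma_2\circ\psi_\alpha=f^{N}\circ j_\alpha$, so $(K_\alpha,j_\alpha,h_\alpha)$ is a $C^{0}$ WDS for $f^{N}$ with rotation number (the class of) $\alpha$. Set $D(\alpha)=(K_\alpha,j_\alpha,h_\alpha)$; $N=1$ in the $\sigma_2$-horseshoe case.

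\emph{Continuity.} It remains to see that $\alpha\mapsto D(\alpha)$ is continuous for the closeness of WDS, i.e.\ that $\alpha\mapsto j_\alpha(K_\alpha)$ is Hausdorff-continuous in $M$ and $\alpha\mapsto\cg(\prec_{K_\alpha})$ is Hausdorff-continuous in $M^{3}$. As $\Psi$ is a fixed homeomorphism, this reduces to continuity, at every irrational slope, of $\alpha\mapsto S_\alpha\subset\Sigma_2$ and of $\alpha\mapsto\cg(\prec^{\mathrm S}_\alpha)\subset\Sigma_2^{3}$. The first holds because the set of Sturmian factors of a given length of slope $\alpha$ is locally constant in $\alpha$ near any irrational (it jumps only when two of the points $\{k\alpha\}$ collide, i.e.\ at rationals), so $S_\beta$ and $S_\alpha$ agree on arbitrarily long cylinders for $\beta$ close to an irrational $\alpha$; the discontinuity at rational slopes is harmless since we work over $(\T\backslash\Q)/{\sim}$. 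For the second, if $\beta_n\to\alpha$ with $\alpha$ irrational and $s_n\in S_{\beta_n}$, $s_n\to s\in S_\alpha$, then $\pi_{\beta_n}(s_n)\to\pi_\alpha(s)$ (the level-$k$ coding intervals shrink, uniformly near $\alpha$, and the ones carrying $\pi_{\beta_n}(s_n)$ converge to the one carrying $\pi_\alpha(s)$); from this a convergent sequence of $\prec^{\mathrm S}_{\beta_n}$-ordered triples has a $\prec^{\mathrm S}_\alpha$-ordered limit, and, using that each $\pi_\beta:S_\beta\to\T$ is onto, every $\prec^{\mathrm S}_\alpha$-ordered triple is a limit of $\prec^{\mathrm S}_{\beta_n}$-ordered triples, which is precisely Hausdorff convergence of the order graphs.

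I expect the main obstacle to be the second half of the continuity step: tracking the circular orders near the blow-up orbit and verifying that triples straddling a blow-up fibre pass to the limit correctly, which needs the uniform control of the codings near that orbit together with the left/right resolution convention. The order-rigidity used in the second paragraph (conjugacies between minimal almost one-to-one extensions of a fixed irrational rotation respect the circular order) is the other place where a genuine, if standard, argument is required; the rest is bookkeeping.
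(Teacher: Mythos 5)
Your proposal is correct and follows essentially the same route as the paper: reduce to an embedded full $2$-shift (for some power $f^N$), realize each one-gap Denjoy minimal set inside it as the Sturmian subshift of slope $\alpha$, and get continuity from the local constancy near irrationals of the finite Sturmian language together with control of the circular order through the rotation factor. The only presentational difference is that the paper builds the conjugacy $\ell_\alpha:C_\alpha\to\Sigma_2$ explicitly as an itinerary map for a two-interval cover adapted to the gap (which makes cylinders into order intervals and renders the order-continuity concrete), whereas you invoke the uniqueness, up to rotation, of minimal almost one-to-one extensions of $R_\alpha$ with one blow-up orbit --- a correct but abstract substitute for the same step.
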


Different authors before us built embedding of Denjoy Dynamics into horseshoes. For example, in \cite{HokHol1986}, the authors build a uncountable family of Denjoy Dynamics in a given horseshoe. If we analyse their construction, for every irrational rotation number, they build  a uncountable family of weak Denjoy sub-systems with two holes that are not conjugate together (see Markley, \cite{Mar1970},  for a characterization of conjugated Denjoy examples). In \cite{Bo2000}, looking for special invariant measures of the angle doubling on the circle, Bousch uses the one side shift on $\{0, 1\}^\N$ and the unique invariant measure with support in a  Cantor set analogous to the one we build.

\begin{remks}\begin{itemize}
\item The continuous WDS that we will embed in the horseshoe are WDS that have only a pair of orbits that are $\omega$ asymptotic (and then $\alpha$-asymptotic because we have a Denjoy Dynamics), i.e. that correspond to a Denjoy example with exactly one orbit of a wandering interval (we will say one gap).
\item Observe that the shift Dynamics is expansive. Hence we cannot embed in it a WDS with a infinite countable number of gaps: one of these gaps would have all its orbit with diameter less than the expansivity constant, which is impossible.
\item But it is possible to embed a family of WDS with a finite number $p$ of gaps in a $\sigma_{\sup\{ 2,p\}}$-horseshoe.
\item A similar method to embed Cantor sets with an interval of rotation numbers was proposed by K.~Hockett and P.~Holmes for dissipative twist maps in \cite{HokHol1986}. Here we proved a more general statement (for WDS) and also prove a continuity result.
 \end{itemize}
\end{remks}
\begin{cor}\label{DSSinHS} Let $f:M^{(2)}\rightarrow M^{(2)}$ be a $C^k$ diffeomorphism of a surface and let $\ch$ be a horseshoe for $f$. Then there is a $N\ge 1$ and a continuous map $D: r\in (\T\backslash\Q)/{x\sim -x}\mapsto (K_r, \gamma_r, h_r)$ such that 
\begin{itemize}
\item $D(r)=(K_r, \gamma_r, h_r)$ is a continuous  Denjoy sub-system with rotation number $r$ for $f^N$;
\item $\gamma_r(K_r)\subset \ch$.
\end{itemize}
Moreover, if $\ch$ is a $\sigma_2$-horseshoe, we have $N=1$.

\end{cor}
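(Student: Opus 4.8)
The plan is to combine Theorem \ref{WDSSinHS} with Proposition \ref{TweakstrongDenjoy} in the obvious way, taking care of the parameter dependence. First I would apply Theorem \ref{WDSSinHS} to the surface diffeomorphism $f:M^{(2)}\to M^{(2)}$ and the horseshoe $\ch$: this produces the integer $N\ge 1$ and the continuous family $r\mapsto (K_r, j_r, h_r)$ of WDS for $f^N$, with rotation number $r$ and with $j_r(K_r)\subset\ch$. Since $M^{(2)}$ is a surface and $f^N$ is a surface homeomorphism, Proposition \ref{TweakstrongDenjoy} applies to each $(K_r, j_r, h_r)$ and yields a $C^0$ Denjoy sub-system $(K_r, \gamma_r, h_r)$ with $(\gamma_r)_{|K_r}=j_r$. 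Because $j_r(K_r)=\gamma_r(K_r)\subset\ch$ and $h_r$ is unchanged, the new triplet has the same rotation number $r$ and the image is still contained in $\ch$, so each individual requirement of the statement is met, as is the final claim $N=1$ when $\ch$ is a $\sigma_2$-horseshoe.

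The one genuine point to address is the continuity of $r\mapsto (K_r,\gamma_r,h_r)$. I would observe that the topology on WDS (and on Denjoy sub-systems, via their non-wandering sets) introduced before Proposition \ref{Prota} depends only on the pair $\big(j_r(K_r),\ \cg(\prec_{K_r})\big)$ — the Hausdorff limit of the Cantor sets together with the Hausdorff limit of the order graphs in $(M)^3$. Since $\gamma_r$ restricted to $K_r$ equals $j_r$, we have $\gamma_r(K_r)=j_r(K_r)$ and the circular order $\prec_{K_r}$ it induces is identical to the one coming from the WDS. Hence the map $r\mapsto(K_r,\gamma_r,h_r)$ is, in this topology, literally the same map as $r\mapsto(K_r,j_r,h_r)$, whose continuity is part of Theorem \ref{WDSSinHS}. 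Therefore continuity is inherited for free and there is essentially nothing extra to prove.

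The main (and only mild) obstacle is bookkeeping: Proposition \ref{TweakstrongDenjoy} is an existence statement obtained through an extension theorem for homeomorphisms between Cantor sets of surfaces, so a priori the extension $\gamma_r$ is not canonically attached to $(K_r,j_r,h_r)$ and one cannot expect the embeddings $\gamma_r:\T\to M$ themselves to vary continuously in $r$ in any strong sense. This is why I would emphasize at the outset that the continuity asserted in the corollary is exactly the continuity of the associated \emph{equivalence classes} of Denjoy sub-systems — equivalently, of the data $(j_r(K_r),\cg(\prec_{K_r}))$ — and not of the full ambient embeddings. Once this is made explicit, the argument reduces to the two invocations above plus the remark that equivalence of WDS (hence the induced order, hence the rotation number by Corollary \ref{Crotasubsytem}) is preserved under passing to a Denjoy extension. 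I would end by noting that the $\sigma_2$ case gives $N=1$ verbatim from Theorem \ref{WDSSinHS}, since Proposition \ref{TweakstrongDenjoy} does not alter $N$.
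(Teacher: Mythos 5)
Your proposal is correct and follows exactly the paper's (one-line) proof: apply Theorem \ref{WDSSinHS} to get the continuous family of WDS in $\ch$, then Proposition \ref{TweakstrongDenjoy} to extend each $j_r$ to an embedding $\gamma_r$ of $\T$. Your extra discussion of why continuity is unaffected --- the topology only sees $\bigl(j_r(K_r),\cg(\prec_{K_r})\bigr)$, not the ambient embedding --- is a correct and welcome elaboration that matches the paper's own remark that the embeddings $\gamma_r$ need not vary continuously.
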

\begin{remks}
\begin{itemize}
\item We don't know how to choose continuously the embedding $\gamma_r$ or at least its image. But we don't need that to describe the Dynamics on $\gamma_r(K_r)$.\\
\item In the setting of the Aubry-Mather theory,  the map that associates to any Aubry-Mather set $\ca_r$ the graph that  linearly interpolates $\ca_r$ is in fact continuous when we endow the set of functions with the $C^0$ distance, and we will see that the Aubry-Mather sets with an irrational rotation number are actually contained in some horseshoes in the generic case. But the Aubry-Mather set don't continusously depend on the rotation number $r\in\R\backslash \Q$, so even in the case of the Aubry-Mather we don't kow if we can interpolate in a continuous way by a curve.
\end{itemize}
\end{remks}
We now focus on Aubry-Mather theory  and address the inverse problem: are the WDS that appear in a  natural way in symplectic 2-dimensional dynamics   contained in some horseshoe?\\
%DONNER DEFINITION DE TWIST.

\begin{theorem}\label{TAMhorseshoe}
Let $f:\T\times \R\rightarrow \T\times\R$ be an exact  symplectic twist map and let $F:\R^2\rightarrow \R^2$ be a lift of $f$. Assume that $\ca_r^+$ (resp. $\ca_r^-$) is uniformly hyperbolic for some rational number $r\in\Q$.  Let $\cv_r$ be a neighbourhood of $\ca_r^+$ (resp. $\ca_r^-$). Then there exists a horseshoe $\ch_r^+$ (resp. $\ch_r^-$) for some $f^N$ and $\varepsilon >0$ such that
\begin{itemize}
\item $\ch_r^+$ (resp. $\ch_r^-$)  contains $\ca_r^+$ (resp. $\ca_r^-$) and is contained in $\cv_r$;
\item every Aubry-Mather set with rotation number in $(r, r+\varepsilon)$ (resp. $(r-\varepsilon, r)$) is contained in $\ch_r^+$ (resp. $\ch_r^-$);
\item every point in $\ch_r^+$ (resp. $\ch_r^-$)  has no conjugate points, i.e. has its orbit that is locally minimizing.
\end{itemize}
\end{theorem}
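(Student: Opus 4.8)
The plan is to exploit the hyperbolicity of $\ca_r^\pm$ together with the variational (minimizing) structure of the Aubry-Mather sets with nearby rotation numbers. I will treat the case of $\ca_r^+$; the case of $\ca_r^-$ is symmetric (replace $F$ by $F^{-1}$, or reverse orientations). Write $r=p/q$, and replace $f$ by $f^N$ with $N=q$, so that the orbit of $\ca_r^+$ becomes a finite union of fixed points of $F^{N}$ (translated by $p$) and $\ca_r^+$ itself is, in the lift, the closure of a well-ordered set of points $x$ with $\pi_1\circ F^{N}(x)\geq \pi_1(x)+p$ fixed by the appropriate deck transformation composed with $F^N$. Since $\ca_r^+$ is uniformly hyperbolic and is a Lipschitz graph consisting of minimizing orbits, its stable and unstable sets are well-defined Lipschitz curves transverse to the fibers $\{\t\}\times\R$, and by the twist condition plus minimality the orbits in $\ca_r^+$ are connected heteroclinically (or homoclinically, when $\ca_r^+$ is a periodic orbit) in an ordered fashion. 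The first step is therefore to invoke the standard Birkhoff–Smale/Easton construction of a horseshoe $\ch_r^+$ in an arbitrarily small neighbourhood $\cv_r$ of $\ca_r^+$, built from the transverse homoclinic/heteroclinic intersections of the stable and unstable branches of the periodic points sitting in $\ca_r^+$; this gives the first bullet. One has to be slightly careful that the transversality needed is furnished for \emph{every} such map: here the crucial point is that for twist maps the stable and unstable manifolds of a hyperbolic minimizing periodic orbit are graphs over an interval of the base near the orbit, and the twist condition forces their intersection to be topologically transverse in the sense of Burns–Weiss, so one can use their version of the horseshoe theorem rather than needing genuine $C^1$-transversality.

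The second bullet is the heart of the matter, and it is where the Aubry-Mather machinery enters. Let $\ca$ be an Aubry-Mather set with $\rho(\ca)\in(r,r+\varepsilon)$ for $\varepsilon$ small to be chosen. By the last item in the recalled list of properties, as $\varepsilon\to 0$ the sets $\ca$ accumulate (for the Hausdorff distance) only on Aubry-Mather sets of rotation number exactly $r$, and among those, the condition $\rho(\ca)>r$ together with the maximality/ordering properties forces the only possible limits to lie in $\ca_r^+$ (this is the usual one-sided semicontinuity: sequences of minimizers with rotation numbers decreasing to $p/q$ from above converge to orbits of $\ca_r^+$, not $\ca_r^-$, because of the inequality $\pi_1\circ F^{N}(x)\geq \pi_1(x)+p$ that passes to the limit). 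Consequently there is $\varepsilon>0$ such that every Aubry-Mather set with rotation number in $(r,r+\varepsilon)$ is contained in $\cv_r$. Now I want to upgrade ``contained in $\cv_r$'' to ``contained in $\ch_r^+$''. For this I use that $\ch_r^+$, being the maximal invariant set in a suitable union of boxes around the heteroclinic/homoclinic orbits, captures \emph{every} orbit that stays in $\cv_r$ for all (positive and negative) time and shadows the periodic orbits in the prescribed cyclic order; and an Aubry-Mather set with rotation number slightly larger than $p/q$ is, orbit by orbit, exactly such a shadowing orbit — its points spend long stretches near the periodic points of $\ca_r^+$ and make occasional ``excursions'' along the unstable/stable branches, the combinatorics of the excursions being governed by the continued-fraction / Sturmian coding of the rotation number. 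Thus each such orbit lies in the symbolic model of $\ch_r^+$, hence in $\ch_r^+$. The delicate point to check is that the box neighbourhoods defining $\ch_r^+$ can be chosen (shrinking $\varepsilon$ if necessary) so as to contain these excursions for \emph{all} rotation numbers in $(r,r+\varepsilon)$ simultaneously; this is a uniformity statement that follows from the hyperbolic estimates (the excursions have exponentially small ``width'' in the transverse direction, uniformly).

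The third bullet, absence of conjugate points on $\ch_r^+$, I would obtain as follows. The notion of conjugate points is a purely variational, local one: an orbit segment has no conjugate points iff it is a local minimizer of the discrete action, equivalently iff the second variation along it is positive definite, equivalently (for twist maps) iff the associated Jacobi/Green bundles are transverse to the vertical along the whole orbit. The orbits in $\ca_r^+$ are globally minimizing, hence have no conjugate points, and having no conjugate points is an \emph{open} condition on orbit segments in the $C^1$ topology on orbits: if an orbit segment has no conjugate points, every sufficiently $C^1$-nearby orbit segment also has none. Since $\ch_r^+\subset\cv_r$ and we are free to have taken $\cv_r$ a small $C^1$-neighbourhood of $\ca_r^+$ (here one uses that $\ca_r^+$ is hyperbolic, so a neighbourhood can be chosen in which every complete orbit $C^1$-shadows an orbit of $\ca_r^+$), every complete orbit in $\ch_r^+$ is $C^1$-close, over arbitrarily long time windows, to a minimizing orbit of $\ca_r^+$, and therefore has positive-definite second variation over every finite window, i.e. has no conjugate points; equivalently its Green bundles stay transverse to the vertical, so the orbit is locally minimizing. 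The main obstacle, in my view, is not any single bullet but the interlocking uniformity: choosing one $\varepsilon>0$ and one horseshoe $\ch_r^+$ (with its finitely many coding boxes) that works for the whole one-sided interval of rotation numbers at once, rather than a sequence of horseshoes shrinking to $\ca_r^+$; this forces one to make the box sizes in the Birkhoff–Smale construction comparable to the hyperbolic contraction rates and to check that the Aubry-Mather excursions fit inside them — a quantitative hyperbolicity estimate combined with the semicontinuity of Aubry-Mather sets in the rotation number.
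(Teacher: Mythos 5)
Your treatment of the first two bullets follows essentially the same route as the paper: build a heteroclinic horseshoe around the cycle of hyperbolic periodic points and connecting orbits inside $\ca_r^+$, then observe that an Aubry--Mather set $\ca$ with $\rho(\ca)\in(r,r+\varepsilon)$ satisfies $\pi_1\circ F^q>\pi_1+p$ on its lift, which (by semicontinuity in the rotation number) pushes $\ca$ into the chosen neighbourhood $\cN$ of $\ca_r^+$ and away from $\ca_r^-\setminus\cN$; since $\ca$ is invariant and the horseshoe is the \emph{maximal} invariant set in $\cN$, the containment $\ca\subset\ch_r^+$ is then automatic --- your extra ``Sturmian coding of the excursions'' step is not needed. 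Two remarks on this part: (i) you never establish that $\ca_r^+$ contains only \emph{finitely} many heteroclinic orbits, which is indispensable to cover it by finitely many boxes; the paper derives this from expansivity together with the uniform Lipschitz-graph property of $\ca_r^+$ (disjoint gaps of first-projection length at least $\varepsilon/(K+1)$ cannot accumulate on $\T$). (ii) Your worry about transversality is legitimate, and falling back on Burns--Weiss topological transversality is a reasonable alternative, though in this setting hyperbolicity of a minimizing set forces the two Green bundles apart and hence genuine transversality of the invariant manifolds.

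The genuine gap is in the third bullet. Your argument rests on the claim that every complete orbit of $\ch_r^+$ is $C^1$-shadowed by (or, over long finite windows, $C^1$-close to) a single minimizing orbit of $\ca_r^+$, and then invokes openness of ``no conjugate points''. This is false: $\ca_r^+$ is \emph{not} locally maximal --- that is the whole point of the theorem --- and a typical horseshoe orbit makes several distinct excursions along different heteroclinic branches within one time window. Over such a window it is close only to a \emph{concatenation} of minimizing segments (a pseudo-orbit of $\ca_r^+$), not to a genuine orbit segment of $\ca_r^+$; and concatenations of minimizing segments need not be locally minimizing --- this is precisely the mechanism by which conjugate points and positive entropy are usually created. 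Even the finite-window version therefore does not reduce to an openness statement, and the required positive-definiteness of the second variation would have to be uniform in the window length, which your argument does not supply. The paper avoids this entirely with the Green bundles: along $\ca_r^+$ the half Green bundle $g_+$ is a $Df$-invariant field of half-lines transverse to the vertical and contained in the unstable bundle; by continuity of $E^u$ over the hyperbolic set one extends it to a field of half-lines in $E^u$ over all of $\ch_r^+$, and for $\cN$ small enough this field and its first $qN$ images under $Df$ remain transverse to the vertical, which is exactly the criterion for every orbit of $\ch_r^+$ to be free of conjugate points. Some argument of this invariant-cone type is needed to close your third bullet.
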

%DEFINIR SANS POINT CONJUGUE

\begin{remks}
\begin{itemize}
\item It is well known that the topological entropy of a twist map restricted to the union of all its hyperbolic  Aubry-Mather sets is zero and has zero Hausdorff dimension (see \cite{Fa1989}). Theorem \ref{TAMhorseshoe} implies that for an open and dense subset of conservative twist diffeomorphisms (in any reasonable topology), there exists an invariant set $\ck$ of points with no conjugate points such that the Dynamics restricted to $\ck$ has positive topological entropy and positive Hausdorff dimension.
\item In \cite{LeC1987}, a transitive set that contains all the Aubry-Mather sets is built by P. Le Calvez. But this set is very different from the one we build here, because it contains in general orbits with conjugate points and is far from every Aubry-Mather set $\ca_r^\pm$. Moreover, it is not a horseshoe.
\item Observe that no WDS $(K, j, h)$ that is contained in a hyperbolic horseshoe is $C^1$. Indeed, the endpoints $a$, $b$ of every gap are $\alpha$ and $\omega$-asymptotic and their orbits are dense in $j(K)$. But for $n$ large enough, $f^n(a)$ and $f^n(b)$ are in the same local stable manifold and then oriented in the stable direction and $f^{-n}(a)$ and $f^{-n}(b)$ are in the same local unstable manifold and the oriented in the unstable direction. Hence, close to any point in $j(K)$, we find points such that the geodesic that joins them is  either along the stable or the unstable direction. So $j$ cannot be $C^1$. In the Aubry-Mather setting, it is Lipschitz.
\item As we noticed before, a weak Denjoy sub-system $(K, j, h)$ that is contained in a horseshoe has a finite number of gaps. When the horseshoe is uniformly hyperbolic with an expansivity constant equal to $\varepsilon$ and $j$ is $k$-biLipschitz, it can be proved that the number of gaps is at most $\frac{k}{\varepsilon}$. 
\end{itemize}
\end{remks}

A remarkable result of P.~Le ~Calvez asserts that general Aubry-Mather sets of general symplectic twist diffeomorphisms are uniformly hyperbolic (see\cite{LeC1988}). Joint with Theorem \ref{TAMhorseshoe}, this implies the following corollary.
\begin{cor}\label{CAMhorseshoe}
There exists a dense $G_\delta$ subset $\cg$ of the set of $C^k$ symplectic twist diffeomorphisms (for $k\geq 1$) such that for every $f\in \cg$, there exist an open and dense subset $U(f)$ of $\R$ and a sequence $(r_n)_{n\in\N}$ in $U(f)\cap \Q$ such that every minimizing Aubry-Mather set with rotation number in $U(f)$ is hyperbolic and contained in a horseshoe associated to a minimizing hyperbolic Aubry-Mather set whose  rotation number is $r_n$.
\end{cor}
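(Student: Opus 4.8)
The plan is to deduce Corollary \ref{CAMhorseshoe} by combining Theorem \ref{TAMhorseshoe} with Le Calvez's genericity result from \cite{LeC1988}, together with a Baire-category argument to assemble a single dense $G_\delta$ set out of the local data. First I would fix the regularity $k\ge 1$ and work in the space of $C^k$ exact symplectic twist diffeomorphisms of the annulus with its usual (Whitney-strong or $C^k$) topology, which is a Baire space. By Le Calvez's theorem, there is a dense $G_\delta$ set $\cg_0$ of such $f$ for which every minimizing Aubry-Mather set is uniformly hyperbolic; in particular for $f\in\cg_0$ and every rational $r=p/q$, the sets $\ca_r^\pm$ are uniformly hyperbolic, so Theorem \ref{TAMhorseshoe} applies to each of them and produces, for each rational $r$ and each choice of sign, an $\varepsilon(r)>0$ and a horseshoe $\ch_r^\pm$ associated to $\ca_r^\pm$ containing every Aubry-Mather set with rotation number in $(r,r+\varepsilon(r))$ (resp. $(r-\varepsilon(r),r)$), all of whose orbits are locally minimizing.

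Next I would define, for $f\in\cg_0$, the open set $U(f)=\bigcup_{r\in\Q}\big((r-\varepsilon(r),r)\cup(r,r+\varepsilon(r))\big)\subset\R$. This is open by construction and dense because it contains a punctured neighbourhood of every rational, hence its closure contains $\Q$ and therefore all of $\R$. Any minimizing Aubry-Mather set $\ca$ with $\rho(\ca)\in U(f)$ then has $\rho(\ca)\in(r,r+\varepsilon(r))$ (or the left analogue) for some rational $r$, so by the second and third bullets of Theorem \ref{TAMhorseshoe} it is hyperbolic (indeed Le Calvez already gives this) and contained in the horseshoe $\ch_r^+$ (resp. $\ch_r^-$), which is itself associated to the hyperbolic minimizing Aubry-Mather set $\ca_r^+$ (resp. $\ca_r^-$) with rational rotation number $r$. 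Choosing an enumeration $(r_n)_{n\in\N}$ of the rationals $r$ actually used gives the sequence demanded in the statement, lying in $U(f)\cap\Q$.

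The one genuine subtlety — and the step I expect to be the main obstacle — is that the horseshoes $\ch_r^\pm$ and the constants $\varepsilon(r)$ depend on $f$, and one must check that the union $U(f)$ is honestly dense and that the whole construction can be carried out simultaneously over all $f$ in a single dense $G_\delta$ set rather than a set depending on which rationals one looks at. Here I would simply take $\cg=\cg_0$: no further intersection is needed, because once $f\in\cg_0$ the local applications of Theorem \ref{TAMhorseshoe} are automatic for that fixed $f$, and the set $U(f)$ and the horseshoes are then defined from $f$ as above. Thus the only thing to verify carefully is the density of $U(f)$: since for each rational $r$ one has $\varepsilon(r)>0$, the set $U(f)$ meets every nonempty open interval of $\R$ (it contains points arbitrarily close to any given rational, hence to any real), so $\overline{U(f)}=\R$ and $U(f)$ is open and dense. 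A secondary point worth a sentence is that "minimizing Aubry-Mather set" in the corollary is exactly the class handled by Theorem \ref{TAMhorseshoe} and by \cite{LeC1988} (the $\ca_r$, $\ca_r^\pm$ of the Standing Example), so the quantifier "every minimizing Aubry-Mather set with rotation number in $U(f)$" is meaningful and the containment is the one produced in Theorem \ref{TAMhorseshoe}.
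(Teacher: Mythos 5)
Your argument is essentially the paper's own proof: generic hyperbolicity of the rational-rotation-number Aubry--Mather sets from \cite{LeC1988}, then Theorem \ref{TAMhorseshoe} applied at each rational $r$ to both $\ca_r^{\pm}$, and $U(f)=\bigcup_{r\in\Q}(r-\varepsilon_r,r+\varepsilon_r)$, which is open and dense exactly as you argue. The one correction needed is your statement of Le Calvez's theorem: for generic $f$ it is \emph{not} true that every minimizing Aubry--Mather set is uniformly hyperbolic (invariant curves of irrational rotation number persist and are never uniformly hyperbolic); the correct generic statement, and the one the paper uses, is that all Aubry--Mather sets with \emph{rational} rotation number are hyperbolic, which is precisely what you invoke for $\ca_r^{\pm}$, so the proof goes through unchanged. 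For the same reason, the hyperbolicity asserted in the conclusion for rotation numbers in $U(f)$ should be deduced from containment in the uniformly hyperbolic horseshoe $\ch_r^{\pm}$ rather than attributed directly to Le Calvez.
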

\begin{remk}
Observe that in \cite{Go1985} , Goroff gives an example where the union of all the Aubry-Mather sets is uniformly hyperbolic.
\end{remk}
An open problem is the possible extension of Theorem \ref{TAMhorseshoe} in a relaxed setting. Hence we rise the following questions.

\begin{ques} (A.~Fathi)
Without assuming hyperbolicity, are the Aubry-Mather sets that are Cantor contained in some (non-hyperbolic) horseshoe?
\end{ques}

Another question concerns the Dynamics that are not necessarily twist diffeomorphisms.

\begin{ques} 
For a (eventually generic) symplectic diffeomorphism, is any  WDS contained in some horseshoe?
\end{ques}

It is possible to build $C^1$ or $C^2$ examples that have WDS that are not contained in horseshoes (examples that have a $C^1$ invariant curve on which the Dynamics is Denjoy, see \cite{Her1983}), but our question concerns higher differentiability.

\subsection*{Acknowledgements} The author is grateful to Fran\c cois B\'eguin, Pierre Berger, Sylvain Crovisier, Albert Fathi, Anna Florio, Patrice Le Calvez, Samuel Petite, Barbara Schapira, Jean-Paul Thouvenot and Maxime Zavidovique for insightful discussions that helped clarify several ideas.

\subsection{Notations}
For any hyperbolic periodic point $x$ of a $C^1$ diffeomorphism, we denote by $W^s(x,f)$ or $W^s(x)$ (resp. $W^u(x,f)$ or $W^u(x)$) its stable (resp. unstable) submanifold and by $W^s_{\rm loc}(x,f)$ or $W^s_{\rm loc}(x)$ (resp. $W^u_{\rm loc}(x,f)$ or $W^u_{\rm loc}(x)$) its local stable (resp. unstable) submanifold. We adopt exactly the same notations for non-necessarily periodic points that belong to some hyperbolic set.

Also we mention that the annulus is $\A=\T\times \R$, that is tangent space is $\A\times \R^2$ and that the tangent space at every point is endowed with its usual Euclidean norm. Moreover, we use the notation $\pi_1:\A\rightarrow \T$ for the first projection as well as its lift $\pi_1:\R^2\rightarrow \R$.

%%%%%%%%%%%%%%%%%%%%%%%%%%%%%%%%%%%%%%%%%%%%%%%%%%%%%%%%%%%%%%%%%%%%%
 \section{Proof of Proposition \ref{TweakstrongDenjoy}}

  We assume that $(K, j, h)$ is a weak Denjoy sub-system of a surface homeomorphism.  If we embed $\T$ in $\R^2$, then $K$ is a Cantor set that is subset of $\R^2$.  Moreover $j(K)$ is a Cantor subset of the surface $M$. We can find a covering of $j(K)$ by a finite number of open topological discs.  Decreasing them, we can assume that their closures are disjoint closed topological discs. Joining them, we find a closed annulus whose interior is an open annulus $\ca$ that contained $j(K)$. There exists a homeomorphism $\Phi: \ca\rightarrow \R^2\backslash \{(0, 0)\}$.  Then the Cantor subset $\Phi\circ j(K)$ of $\R^2$ is homeomorphic to the Cantor subset $K$ of $\R^2$. 
 We deduce from chapter 13 of \cite{Moi1977}  that there exists a homeomorphism $\psi:\R^2\rightarrow \R^2$ that extends the homeomorphism $j^{-1}\circ \Phi^{-1}:\Phi\circ j(K)\rightarrow K$.\footnote{Observe that this is specific to the 2-dimensional setting and that there exists some homeomorphisms between two Cantor subsets of $\R^3$ that cannot be extended to a homeomorphism of $\R^3$, see Theorem 5 of chapter 18 of \cite{Moi1977}.} Perturbing slightly $\psi$, we can assume that $(0, 0)\notin\psi(\T)$. Then $\gamma=\Phi^{-1}\circ \psi: \T\rightarrow \ca\subset M$ is a simple  continuous curve
  and $(K, \gamma, h)$ is a Denjoy sub-system that extends $(K, j, h)$.

%%%%%%%%%%%%%%%%%%%%%%%%%%%%%%%%%%%%%%%%%%%%%%%%%%%%%%%%%%%%%%%%%%%%%%%
\section{Proof of Theorem \ref{Trotasubsytem} and Corollary \ref{Crotasubsytem} }
\subsection{Proof of the first point of Theorem  \ref{Trotasubsytem} and of Corollary \ref{Crotasubsytem}}
We associate to any continuous dynamical system $F:X\rightarrow X$ an equivalence relation $\cR_F$ that is defined by
$$x\cR_F y\Leftrightarrow \lim_{k\rightarrow +\infty} d(F^kx, F^ky)=0.$$
Observe that if $H:X\rightarrow Y$ is a homeomorphism and if $X$ is compact, then we have
$$x\cR_Fy\Leftrightarrow H(x)\cR_{H\circ F\circ H^{-1}} H(y).$$
Hence $X/\cR_F$ is compact if and only if $Y/\cR_{H\circ F\circ H^{-1}} $ is compact. We denote by $p_F:X\rightarrow X/\cR_F$ the projection.

Because of Poincar\'e classification of circle homeomorphisms (see for example \cite{KatHass1995}), for every orientation preserving homeomorphism $h$ of the circle with an irrational rotation number, we have $\cR_h=\cR_{h^{-1}}$. Moreover, for such an orientation preserving homeomorphism of the circle with irrational rotation number, the relation is closed and it is also true for the restriction to any invariant compact subset. In this case, the quotient space, that corresponds to a closed equivalence relation on a compact space, is also compact. We then consider a semi-conjugation $k$ between the orientation preserving homeomorphism $h$ of the circle with irrational rotation number and a rotation $R_\alpha$, i.e. $k$ is non-decreasing continuous map onto the circle such that
$$k\circ h=R_\alpha \circ k.$$
Then $k:\T\rightarrow \T$ is continuous and we have
$$\forall x, y\in \T, k(x)=k(y)\Leftrightarrow x\cR_h y.$$
We denote by $K_h$ the unique non-empty minimal $h$-invariant compact subset (then $K_h$ is $\T$ or a Cantor subset) and we denote by $G_h$ the set of points of $K_h$ that are $\cR_h$ related to another point of $\T$. In other words, $G_h$ is the union of the endpoints of the gaps of the  set $K_h$.\\
Then there exists a unique map $\bar k:K_h/\cR_h\rightarrow \T$ such that   $\bar k\circ p_h=k$. The definition of the quotient topology implies that $\bar k$ is continuous and it is then a homeomorphism from $K_h/\cR_h$ to $\T$. Moreover, there exists a unique map $\bar h: K_h/\cR_h\rightarrow K_h/\cR_h$ that is the quotient Dynamics and that satisfies
$$\bar h\circ p_h=p_h\circ h;$$
we have then 
$$\bar k\circ \bar h= R_\alpha\circ \bar k,$$
i.e. $\bar k$ is a conjugation between $\bar h$ and $R_\alpha$.

Let us consider  two WDS $(K_1, j_1, h_1)$ and $(K_2, j_2, h_2)$ for a same homeomorphism  $f:M\rightarrow M$ of a manifold $M$ such that $C=j_1(K_1)=j_2(K_2)$. Let $k_i$ be a semi-conjugation between $h_i$ and a rotation $R_{a_i}$, i.e.
$$k_i\circ h_i=R_{a_i}\circ k_i.$$
As $f_{|C}=  j_i   \circ h_i\circ j_{i}^{-1}$, then  $C/\cR_f$ is homeomorphic to $K_i/\cR_{h_i}$ and so to $\T$. We denote by $p: C\rightarrow C/\cR_f$ the projection and by $\bar f:C/\cR_f\rightarrow C/\cR_f$ the reduced Dynamics. \\
\begin{center}
\includegraphics[width=9cm]{oueneston.pdf}
\end{center}

Then the map $k_i\circ j_{i}^{-1}:C\rightarrow \T$ is a continuous surjection such that $$k_i\circ j_{i}^{-1}(x)=k_i\circ j_{i}^{-1}(y)\Leftrightarrow p(x)=p(y).$$
Hence, there exists a unique homeomorphism $\ell_i:C/\cR_f\rightarrow \T$ such   $\ell_i\circ p=k_i\circ j_{i}^{-1}$. We have then for all $y=p(x)\in C/\cR_f$
$$R_{a_i}\circ \ell_i(y)=R_{a_i}\circ k_i\circ j_{i}^{-1}(x)=k_i\circ h_i\circ j_{i}^{-1}(x)=k_i\circ j_{i}^{-1}\circ (j_i\circ h_i\circ j_{i}^{-1})(x)=k_i \circ j_{i}^{-1}\circ    f(x)=\ell_i(\bar f(y)).
$$

We deduce that
$$ R_{a_1}=\ell_1\circ \bar f\circ \ell_1^{-1}=(\ell_1\circ \ell_2^{-1})\circ R_{a_2}\circ(\ell_1\circ \ell_2^{-1})^{-1}.$$
As $R_{a_1}$ and $R_{a_2}$ are conjugate, we have $a_1=\pm a_2$. More precisely, $a_1=a_2$ when the conjugation preserves the orientation (and then is $(x\mapsto x+C)$) and $a_1=-a_2$ when the conjugation reverses the orientation (and then is $(x\mapsto C-x)$).  This gives Corollary \ref{Crotasubsytem} but doesn't end the proof of the first point of Theorem \ref{Trotasubsytem}. 

To finish the proof of this point, let us observe that 
$$j_1(G_{h_1}\cap K_1)=j_2(G_{h_2}\cap K_2)=\{ x\in C; \exists y\in C; y\not=x, y\cR_f x\}$$
is the set of the endpoints of the gaps of $f_{|C}$ (gaps are pairs of points that are $\omega$-asymptotic). We denote this set by $C_0$.

Thus we have $k_i(G_{h_i})=k_i\circ j_i^{-1}(C_0)=\ell_i\circ p(C_0)$.
We deduce that $k_1(G_{h_1})=\ell_1\circ \ell_2^{-1}(k_2(G_{h_2}))$. As $\ell_1\circ \ell_2^{-1}$ is either a translation $x\mapsto x+C$ or a symmetry $x\mapsto C-x$, there exists $C\in \R$ such that either $k_1(G_{h_1})=C+k_2(G_{h_2})$ or $k_1(G_{h_1})=C-k_2(G_{h_2})$. In other words, the image by $k_1$ of the union of the gaps of $K_{h_1}$ is the image by a translation or a symmetry of the image by $k_2$ of the union of the gaps of $K_{h_2}$. As explained in \cite{Her1979} and \cite{Mar1970}, this is equivalent to the fact that $h_1$ and $h_2$ are conjugated.
%%%%%%%%%%%%%%%%%%%%%%%%%%%%%%%%%%%%%%%%%%%%%%%%%%%%%%%%%%%%%%%%%%%%%%
\subsection{Proof of the second point of Theorem  \ref{Trotasubsytem}}
For the second point, we know that $j_i:K_i\rightarrow C$ defines the order $\prec_{K_i}$. If we identify points that are $\omega$-asymptotic, we obtain a reduced order relation $\overline{\prec}_{K_i}$ on $K_i/\cR_{h_i}$ and  $C/\cR_f$ and  $\bar j_i:K_i/\cR_{h_i}\rightarrow C/\cR_f $ is an order preserving homeomorphism. As there are only two possibles orientations on the circle, we deduce for the two reduced order relations on $C/\cR_f$ that either they are equal or they are reverse. To deduce the result for the non-reduced relation, we have just to note that there is only one way 
to define the closed order relation $\prec_{K_i}$ on $C$ whose reduced relation is $\overline{\prec}_{K_i}$.
%%%%%%%%%%%%%%%%%%%%%%%%%%%%%%%%%%%%%%%%%%%%%%%%%%%%%%%%%%%%%%%%%%%%%%

  \section{Proof of Proposition \ref{Prota} }
  Let us begin by explaining some results on the symbolic Dynamics of WDS. If $(K, j, h)$ is a WDS for $f$, we can encode the Dynamics in  the following non injective way\footnote{Observe that this is not necessarily the encoding that is given by the subshift of finite type on the horseshoe when this WDS is contained in some horseshoe.}. Let $x_0\in j(K)$ be a point of $j(K)$. We consider the interval $I_0$ of $j(K)$ of the points $y\in j(K)$ such that $x_0$, $y$ and $f(x_0)$ are in this order for $\prec_K$. We decide that $x_0\in I_0$ but $f(x_0)\notin I_0$. We denote by $I_1=j(K)\backslash I_0$ the complement of $I_0$ in $j(K)$. Then we consider the map that associates to every point $x\in j(K)$ its {\em itinerary}
  $$\ci(x)=(n_k(x))_{k\in\Z}$$
  where $f^k(x)\in I_{n_k(x)}$. When $x_0$ is the right end of a gap (a gap is the image by $j$ of the two endpoints of a wandering interval of $h$) of $j(K)$, $I_0$ and $I_1$ are closed and open in $j(K)$ and then $\ci$ is continuous\footnote{We will prove in section \ref{ssturm} that when $h$ is a Denjoy example with one gap, then $\ci$ is in fact a homeomorphism on its image.}.
  
 We assume that $x_0$ is indeed the right end of a gap of $j(K)$ and we denote by $\ck$ the set $\ci(K)$. As the Denjoy example is semi-conjugate to the rotation with angle $\alpha =\rho(h)$, $\ci(x_0)$ is nothing else than the Sturmian sequence that is associated to the rotation $R_\alpha$, i.e.  (see \cite{Fogg2002})  $n_k(x)=0$ if and only if $k\alpha\in [0, \alpha)$.   
  
  Let us now consider a WDS $(K_1, j_1, h_1)$ that is close to $(K, j, h)$ for the topology that we defined before. Let $(x_1, x_0)$ be the gap whose $x_0$ is the right end in $j(K)$. Then the interval $\cg(\prec_K, x_1, x_0)=\{x_0, x_1\}$ has only two points. As $\cg(\prec_{K_1})$ is close to $\cg(\prec_K)$ for the Hausdorff distance, there exists two points $y_1, y_0\in j(K_1)$ that are close to $x_1, x_0$ and such that $\cg(\prec_{K_1},y_1, y_0)$ is contained in a neighbourhood of $\cg(\prec_K, x_1, x_0)$. As we know that $y_1, y_0\in \cg(\prec_{K_1},y_1, y_0)$, that $y_0$ is close to $x_0$ and that $y_1$ is close to $x_1$, this implies that $\cg(\prec_{K_1},y_1, y_0)$ is close to $\cg(\prec_K, x_1, x_0)$ for the Hausdorff distance. Then we write $$\cg(\prec_{K_1},y_1, y_0)=\cg_0\cup \cg_1$$ where the points of $\cg_0$ are close to $x_0$ and the points of $\cg_1$ are close to $x_1$. Observe that $\cg(\prec_{K_1},y_1, y_0)$ is an interval for $\prec_{K_1}$, where $\prec_{K_1}$ define a (non circular) total order. Hence we can define $z_1=\sup \cg_0$ and $z_0=\inf\cg_1$. Then  $\{z_1, z_0\}$ is a gap of ${K_1}$  that is close to $ \{x_1, x_0\}$ for the Hausdorff topology.  We then associate to $z_1$ its itinerary exactly as we did for $x_1$. Let us fix $N\geq 1$. Then if $(K_1, j_1, h_1)$  is close enough to $(K, j, h)$, the two itineraries between $-N$ and $N$ match. But these itineraries determine the first terms of the continued fraction of the two rotations numbers of $h_1$, $h$ (see \cite{Fogg2002}). Because they coincide up to the order N, we deduce that $\rho(h_1)$ is close to $\rho(h)$ and then the continuity of the rotation number.

  \section{Proof of Theorem \ref{WDSSinHS} and Corollary \ref{DSSinHS}}\label{ssturm}
  \subsection{Proof of Theorem \ref{WDSSinHS}}We will use the following notions.
  
  \begin{defi}
A {\em $n$-cylinder} in $\Sigma_2$ is a set of sequences  $(u_k)_{k\in\Z}\in \{0, 1\}^\Z$such that  $u_{-n}=\delta_{-n}; \dots, u_0=\delta_0; \dots ;u_n=\delta_n$ where the $\delta_i$s are fixed in $\{ 0,1\}$.\\
As $d((u_k)_{k\in\Z}, (v_k)_{k\in \Z})=\max_{k\in\Z}\frac{|u_k- v_k|}{|k|+1}$, observe that a $n$-cylinder is exactly a closed ball with radius $\frac{1}{n+2}$.\\
A $n$-words of $u$ is a sequence of $n$ successive terms of $u$.
\end{defi}

  Let $f:M\rightarrow M$ be a $C^k$ diffeomorphism and let $\ch$ be a horseshoe for $f$. Then there exists a transitive subshift with finite type $\sigma_A: \ck \rightarrow \ck$ that is defined on some shift invariant compact subset $\ck$ of $\Sigma_p$ such that $f_{|\ch}$ is $C^0$ conjugate to $\sigma_A$. Then there exists a $\sigma_A$-invariant compact subset $\ck_0\subset \ck$ and $N\geq 1$ such that $\sigma^N_{A|\ck_0}$ is $C^0$ conjugate to $\sigma_2$. Hence we just need to prove the theorem for a $\sigma_2$-horseshoe to deduce the general statement.   We assume that $f_{|\ch}=k\circ \sigma_2\circ k^{-1}$.
  
  Let now $h_\alpha: \T\rightarrow \T$ be a Denjoy example with minimal Cantor set $C_\alpha$ such that
  \begin{itemize}
  \item $\T\backslash C_\alpha$ is the orbit of one interval $I_\alpha=(a_\alpha, b_\alpha)$;
  \item the rotation number of $h$ is $\alpha$.
  \end{itemize}
  We consider two disjoint segments $I_0(\alpha)$ and $I_1(\alpha)$ in $\T$ such that
  \begin{itemize}
  \item one endpoint of $I_j(\alpha)$ is in $I_\alpha$ and the other one is in $h_\alpha(I_\alpha)$.
  \item $I_0(\alpha)$ joins $I_\alpha$ to $h_\alpha(I_\alpha)$ in the direct sense.
  \end{itemize}
  Let $k_\alpha:\T\rightarrow \T$ be a semi-conjugation between $h_\alpha$ and $R_\alpha$, i.e. $k_\alpha\circ h_\alpha=R_\alpha\circ k_\alpha$. Then, the intervals $I_0(\alpha)$ and  $I_1(\alpha)$ are mapped on intervals $K_0=[0, \alpha]$ and $K_1=[\alpha, 1]$. As $\alpha$ is irrational, if $(n_k)_{k\in\Z}\in \Sigma_2$ is any sequence of $0$ and $1$, there exists at most one $\theta\in \T$ such that, for every $k\in\Z$, we have $\theta+k\alpha\in K_{n_k}$. \\
Let us now consider two points $\theta_1\not=\theta_2$ in $C_\alpha$ such that for every $k\in\Z$, $h_\alpha^k(\theta_1)$ and $h_\alpha^k(\theta_2)$ belong to a same interval $I_{n_k}(\alpha)$. Then for every $k\in \Z$, the points $k_\alpha\circ h_\alpha^k(\theta_1)=k_\alpha(\theta_1)+k\alpha$ and $k_\alpha\circ h_\alpha^k(\theta_2)=k_\alpha(\theta_2)+k\alpha$ belong to the same interval $K_{n_k}$ and so $k_\alpha(\theta_1)=k_\alpha(\theta_2)$, i.e. $\theta_1$ and $\theta_2$ are the two endpoints of some gap of the Cantor set $C_\alpha$. So there exists   $k\in\Z$ such that $h_\alpha^k(\theta_1)$ and $h_\alpha^k(\theta_2)$ are the two endpoints of $I_\alpha$ for example $I_\alpha=(h_\alpha^k(\theta_1), h_\alpha^k(\theta_2))$. But this implies that $h_\alpha^k(\theta_1)\in I_{1}(\alpha)$ and $h_\alpha^k(\theta_2)\in I_{0}(\alpha)$ and this contradicts that for every $k\in\Z$, $h_\alpha^k(\theta_1)$ and $h_\alpha^k(\theta_2)$ belong to a same interval $I_{n_k}(\alpha)$.  So we have proved that if we use the notation for $\theta\in C_\alpha$ that $h_\alpha^k(\theta)\in I_{n_k(\theta)}$, then the map $\ell_\alpha:C_\alpha\rightarrow \Sigma_2$ defined by $\ell_\alpha(\theta)=(n_k(\theta))_{k\in\Z}$ is injective. As the $I_k(\alpha)\cap C_\alpha$ are open (and closed) in $C_\alpha$, this map is also continuous and then is a homeomorphism onto its image.\\
This provides a homeomorphism from $C_\alpha$ onto $\ell_\alpha(C_\alpha)\subset \Sigma_2$ such that 
$$\forall \theta\in C_\alpha, \ell_\alpha\circ h_\alpha(\theta)=\sigma_2\circ \ell_\alpha(\theta).$$
The WDS with rotation number $\alpha\in [0, 1/2)\backslash \Q$ that we consider is then $(C_\alpha, j_\alpha=k\circ \ell_\alpha, h_\alpha)$. \\

Observe that $\ell_\alpha(b_\alpha)=(n_k(b_\alpha))_{k\in\Z}$ is the Sturmian sequence that is associated to the rotation $R_\alpha$. Let us recall that if  $u=(u_k)_{k\in\Z}$ is a Sturmian sequence, then for every $n\geq 1$, there are exactly $n+1$ $n$-words in $u$. %We denote by $\cl_n(u)$ the set of $n$-words of $u$.
  As $h_{\alpha|C_\alpha}$ is minimal, the orbit of $\ell_\alpha(b_\alpha)$ under $\sigma_2$ is dense in $\ell_\alpha(C_\alpha)$. Let us now fix $\alpha_0\in [0, 1/2)\backslash \Q$ and  $n\geq 1$. There exists $N\geq 1$ such that all the $m$-words in $\ell_{\alpha_0}(b_{\alpha_0})$ with $m\leq 2n+1$ are contained in $(n_k(b_{\alpha_0}))_{k\in[-N, N]}$. If now $\alpha$ is close enough to $\alpha_0$, $(n_k(b_{\alpha}))_{k\in[-N, N]}$ is equal to $(n_k(b_{\alpha_0}))_{k\in[-N, N]}$. As $\ell_\alpha(b_\alpha)=(n_k(b_\alpha))_{k\in\Z}$ is Sturmian, this implies that all the $m$-words in $\ell_{\alpha}(b_{\alpha})$ with $m\leq 2n+1$ are contained in $(n_k(b_{\alpha}))_{k\in[-N, N]}=(n_k(b_{\alpha_0}))_{k\in[-N, N]}$, which means that the distance between the $\sigma_2$ orbits of $\ell_\alpha(b_\alpha)$ and $\ell (b_{\alpha_0})$ is less than $\frac{1}{n+2}$. 
  This implies that $\ell_\alpha(C_\alpha)$ is $\frac{1}{n}$-close to $\ell_{\alpha_0}(C_{\alpha_0})$. Hence $j_\alpha(C_\alpha)=k(\ell_\alpha(C_\alpha))$ is close to $j_{\alpha_0}(C_{\alpha_0})=k(\ell_{\alpha_0}(C_{\alpha_0}))$.

We want now to prove that $\cg(\prec_{C_\alpha})$ is close to $\cg(\prec_{C_{\alpha_0}})$. In a equivalent way, we can work in $\Sigma_2$ instead of $\ch$ and assume that the graphs of $\cg(\prec_{C_\alpha})$ and $\cg(\prec_{C_{\alpha_0}})$ are in $(\Sigma_2)^3$. 
Then the intersection of the $n$ cylinder $C(\delta_{-n},\dots, \delta_0, \dots, \delta_n)=\{ (u_k)_{k\in\Z}; \forall k\in [-n, n], u_k=\delta_k\}$ with $\ell_\alpha(C_\alpha)$  is an interval for the order $\prec_{C_\alpha}$, that is before encoding the intersection of intervals $\bigcap_{k=-n}^{k=n}h_\alpha^{-k}(I_{\delta_k})$. This interval is non-empty if and only if $(\delta_i)_{i\in [-n, n]}$ is a $(2n+1)$-word of the Sturmian sequence $(n_k(b_{\alpha_0}))_{k\in\Z}$ for $\alpha_0$.\\
Let us now fix a $n\geq 1$. There exists $N\geq 1$ such that all the admissible $(2n+1)$-words of $(n_k(b_{\alpha_0}))_{k\in \Z}$ are contained in the sequence $(n_k(b_{\alpha_0}))_{k\in [-N, N]}$. There exists a neighbourhood $V$ of $\alpha_0$ in $\T$ such that, for every $\alpha\in V$, we have
\begin{itemize}
\item $\forall k\in[-N, N], n_k(b_\alpha)=n_k(b_{\alpha_0})$;
\item the intervals $C(n_{k-n}(b_\alpha),\dots, n_k(b_\alpha), \dots, n_{k+n}(b_\alpha))\cap \ell_\alpha(C_\alpha)$ and\\
 $C(n_{k-n}(b_{\alpha_0}),\dots, n_k(b_{\alpha_0}), \dots, n_{k+n}(b_{\alpha_0}))\cap \ell_\alpha(C_{\alpha_0})$ for $n-N\leq k\leq N-n$ (that are $\frac{1}{n}$-close to each other) are in the same order, for $\prec_{K_\alpha}$ for the first ones and for $\prec_{K_{\alpha_0}}$ for the second ones, because it is the order of this intervals for the two rotations.
\end{itemize}
We deduce that $\cg(\prec_{C_\alpha})$ is $\frac{1}{n}$-close to $\cg(\prec_{C_{\alpha_0}})$.

\subsection{Proof of Corollary \ref{DSSinHS}}
It is a corollary of Theorems \ref{TweakstrongDenjoy} and \ref{WDSSinHS}.

\section{Proof of Theorem \ref{TAMhorseshoe} and Corollary \ref{CAMhorseshoe}}
\subsection{Proof of Theorem \ref{TAMhorseshoe}}
 
 We assume that $f:\T\times \R\rightarrow \T\times\R$ is a symplectic twist map and that $F:\R^2\rightarrow \R^2$ is one of its lifts. We assume that   $\ca_r^+$  is uniformly hyperbolic for some rational number $r\in\Q$.   We want to prove that there exists a horseshoe $\ch_r^+$   for some $f^n$ and $\varepsilon >0$ such that
\begin{itemize}
\item $\ch_r^+$   contains $\ca_r^+$;
\item every Aubry-Mather set with rotation number in $(r, r+\varepsilon)$  is contained in $\ch_r^+$;
\item every point in $\ch_r^+$   has no conjugate points, i.e. has its orbit that is locally minimizing.
\end{itemize}
We write $r=\frac{p}{q}$ as an irreducible fraction. As $\ca_r^+$ is a compact uniformly hyperbolic set, it has a finite number of $q$-periodic points.  We denote them by $x_1, \dots, x_n$ in the usual cyclic order along  $\T$ (for the first projection). Then $\ca_r^+$ is the union of these periodic points and some heteroclinic orbits between these heteroclinic points (see e.g. \cite{Ban1988}). Moreover, such heteroclinic orbit for $f^q$ that is contained in $\ca_r^+$ can only connect an $x_k$ to $x_{k+1}$ (with $x_{n+1}=x_1$). \\
If two heteroclinic orbits in $\ca_r^+$ connect the same $x_k$ to $x_{k+1}$, then we can choose an order on these to orbits $(y_j)_{j\in\Z}$ and $(z_j)_{j\in\Z}$ for $f^q$ such that

$$x_k<\dots< y_{j-1}<z_{j-1}<y_j<z_j<y_{j+1}<z_{j+1}<\dots <x_{k+1}.$$
Let $\varepsilon>0$ be an expansivity constant for $f^q_{|\ca_r^+}$ and let $K$ be a Lipschitz constant of the Aubry-Mather set $\ca_r^+$ as a graph. Then for some $j$ we have $d(y_j, z_j)\geq \varepsilon$. Hence the distance between  the  first projection of $y_j$, $z_j$   is more than $\frac{\varepsilon}{1+K}$ for some $j$. Of course we can use the same argument for any finite set of heteroclinic orbits $(y^1_j)_{j\in\Z}$, \dots , $(y^N_j)_{j\in\Z}$ connecting $x_k$ to $x_{k+1}$ in $\ca_r^+$. We have
$$x_k\dots <y^1_{j-1}<\dots <y^N_{j-1}<y^1_j<\dots <y^N_j<\dots <x_{k+1},$$
and we find $N$ integers $j_1$, \dots, $j_N$ such that (with the convention $y^{N+1}_j=y^1_{j+1}$)
$$\forall i\in \{ 0, N\}, d(y_{j_i}^i, y_{j_i}^{i+1})\geq \varepsilon.$$
Then the intervals $(\pi_1(y^1_{j_1}), \pi_1(y^2_{j_1}))$, \dots, $(\pi_1(y^N_{j_N}), \pi_1(y^{N+1}_{j_N}))$ are disjoint intervals in $\T$ with length larger or equal to $\frac{\varepsilon}{K+1}$. This implies that $N\leq \frac{K+1}{\varepsilon}$. Hence $\ca_r^+$ is a hyperbolic set that is the union of periodic orbits and of a {\em finite} number of heteroclinic orbits. Moreover, there always exists at least a heteroclinic connection in $\ca_r^+$ between two adjacent periodic points in $\ca_r^+$ (see \cite{Ban1988}). Hence $\ca_r^+$ is a cycle of transverse heteroclinic intersections with  period $q$ (see definition in Appendix \ref{Appa}).

We introduce the notation $p:\R^2\rightarrow \T\times \R$ for the usual projection. When $E\subset \T\times \R$, we denote by $\tilde E=p^{-1}(E)$ its lift.

Let us fix a neighbourhood $\cN$ of $\ca_r^+$. Then $\ca_r^-  \backslash \cN$ is finite because   $\ca_r^- $ is the union of  $\ca_r^- \cap  \ca_r^+ $ and the   union of a finite number of orbits that are homoclinic to $\ca_r^- \cap  \ca_r^+ $.  For every $x\in \tilde\ca_r^-  \backslash \tilde\cN$ , we have $\pi_1\circ F^q(x)<\pi_1(x)+p$. Then $\varepsilon=\min\{ \pi_1(x)+p-\pi_1\circ F^q(x); x\in  \tilde\ca_r^-  \backslash \tilde\cN\}$ is a positive number. We introduce the open set
$${\cal U}=p\left(\{ x\in \R^2; \pi_1(x)+p-\pi_1\circ F^q(x)>\frac{\varepsilon}{2}\}\right)$$
that contains $\ca_r^-  \backslash \cN$.
Then $\cN\cup {\cal U}$ is a neighbourhood of $\ca_r^+\cup \ca_r^-$. As the rotation number is continuous and has the union of minimizing orbits is closed, there exists $\eta>0$ such that every Aubry-Mather set with rotation number in $(r-\eta, r+\eta)$ is in $\cN\cup{\cal U}$. If moreover  $\ca$ is an Aubry-Mather set with rotation number in $(r, r+\eta)$, then we have
$$\forall x \in \tilde\ca, \pi_1\circ F^q(x)>\pi_1(x)+p.$$
Hence $\ca\cap {\cal U}=\emptyset$ and  thus $\ca\subset {\cal N}$.\\
We have the proved that there  exists $\eta>0$ such that  every Aubry-Mather set with rotation number in $(r, r+\eta)$  is contained in $\cN$.

We then use subsection \ref{ssA2}  of Appendix \ref{Appa}. There exists $N\geq 1$ and a neighbourhood $\cN$ of the cycle of transverse heteroclinic intersections with  period $q$ $\ca_r^+$, such that the maximal $f^{qN}$ invariant set contained in $\cN$ is a horseshoe $\ch_r^+$ for $f^{qN}$ (see Definition \ref{Defhs}). This horseshoe then satisfies the two first points of Theorem \ref{TAMhorseshoe}.

Moreover, observe that along $\ca_r^+$, there exists a $Df$ invariant field of half-lines (the half Green bundles $g_+$ of $G_+$, see \cite{Arna 2016}) transverse to the  vertical fiber, that is a subset of the unstable bundle along $\ca_r^+$. By continuity of the unstable bundle along any hyperbolic set, we can extend $g_+$ to the whole $\ch_r^+$ into a field of half-line that are contained in the unstable bundle. If $\cN$ is small enough, this field as well as its first $qN$ images by $Df$ is also transverse to the vertical. This implies the last point of Theorem \ref{TAMhorseshoe}.

\subsection{Proof of Corollary \ref{CAMhorseshoe}}
We use the results of P.~Le~Calvez that are in \cite{LeC1988}. We consider the $G_\delta$ subset $\cg$ of the set of $C^k$ symplectic twist diffeomorphisms  $f$ whose elements satisfy the following conditions.
\begin{itemize}
\item if $x$ is a periodic point for $f$ with smallest period $q$, none of the eigenvalues of $Df^q(x)$ is a root of unity;
\item all the heteroclinic intersections between invariant manifolds of hyperbolic periodic points are transverse.
\end{itemize}
It is proved in  \cite{LeC1988} that all the Aubry-Mather sets that have a rational rotation number are then hyperbolic. By Theorem  \ref{TAMhorseshoe}, for every $r\in\Q$, there exists an open interval $(r-\varepsilon_r, r+\varepsilon_r)$ such that every Aubry-Mather set with rotation number in this interval is contained in the horseshoe $\ch_r^+$ or the horseshoe $\ch_r^-$. This gives the conclusion of the corollary for 
$$U(f)= \bigcup_{r\in\Q}(r-\varepsilon_r, r+\varepsilon_r).$$

 \appendix
 \section{On horseshoes}\label{Appa}
 In this section, we will be interested in some horseshoes that are related to the heteroclinic intersections. Generally, authors look at what happens  close to one homoclinic point associated to a periodic point (in \cite{BurnWei1995}, the authors also consider heteroclinic  connections for two fixed points). But to apply our results to Aubry-Mather sets, we will need to study the horseshoes that can be built by using a (circular) family of periodic points and heteroclinic intersections. Let us explain this now.

\subsection{Introduction to heteroclinic horseshoes}\label{ssA1} We will consider heteroclinic cycles. For a diffeomorphism $f:M\rightarrow M$ of a surface, we will call a $q$-periodic point $x$ a {\em saddle} if the two eigenvalues $\lambda$, $\mu$ of $Df^q(x)$ are positive and such that $\mu<1<\lambda$.

\begin{defi}
Let $f:M\rightarrow M$ be a surface diffeomorphism. A {\em cycle of transverse heteroclinic intersections with period 1} is determined by
\begin{itemize}
\item a finite cyclically ordered set of saddle hyperbolic fixed points $x_{n+1}=x_1, \dots , x_n$ with an orientation on each submanifold $W^s(x_i)$ and $W^u(x_i)$;
\item for every $k\in [1, n]$ a non-zero finite number $n_k$ of transverse heteroclinic orbits in $W^u(x_k,f)\cap W^s(x_{k+1},f)$: $x_k$ and  $y_1^k, \dots , y_{n_k}^k$ are in this order along $W^u(x_k,f)$ and $y_1^k, \dots , y_{n_k}^k$, $x_{k+1}$ also along $W^s(x_{k+1},f)$\footnote{This implies that the $y_i^k$ are all on a same branch of $W^u(x_k,f)$ and   $W^s(x_{k+1},f)$.}. Moreover, they define different orbits. %PEUT-ETRE FAUT-IL AMELIORER LE DERNIER PARAGRAPHE DE CETTE DEFINITION EN PRENANT UN VOISINAGE FONDAMENTAL.

\end{itemize}

\end{defi}
\begin{center}
\includegraphics[width=8cm]{cycleheteroclinic.pdf}
\end{center}

\begin{defi}
Let $f:M\rightarrow M$ be a surface diffeomorphism and let $q\geq 1$ be an integer. A {\em cycle of transverse heteroclinic intersections with  period $q$} is determined by
\begin{itemize}
\item a finite cyclically ordered set of saddle hyperbolic $q$-periodic points $x_{nq+1}=x_1, \dots , x_{nq}$  such that this order is preserved by $f$  with an orientation on each submanifold $W^s(x_i)$ and $W^u(x_i)$; we assume that every set \\
$\{ x_i, x_{i+n}, \dots x_{i+(q-1)n}\}$ is an orbit;
\item for every $k\in [1, qn]$ a non-zero finite number $n_k$ of transverse heteroclinic orbits in $W^u(x_k,f)\cap W^s(x_{k+1},f)$: $x_k$ and  $y_1^k, \dots , y_{n_k}^k$ are in this order along $W^u(x_k,f)$ and $y_1^k, \dots , y_{n_k}^k$, $x_{k+1}$ also along $W^s(x_{k+1},f)$\footnote{This implies that the $y_i^k$ are all on a same branch of $W^u(x_k,f)$ and   $W^s(x_{k+1},f)$.}. Moreover, they define different orbits.
\item we also assume $n_{k+n}=n_k$, that $x_k$ and $x_{n+k}$ are on a same orbit and that $y_j^k$ and $y_j^{n+k}$ are on the same orbit.

\end{itemize}

\end{defi}

\begin{nota}
Now we consider a cycle of transverse heteroclinic intersections $\ch$ with period $q$ for $f$ that is given by the $x_ks$ and the $y_j^{k}s$ as before.  We denote by $K(\ch)$ the union of the orbits of the $x_k$s and the $y_j^k$s.
\end{nota}
\begin{remk}
Observe that $K(\ch)$ is a $f$-invariant  compact set that is uniformly hyperbolic.  We denote by $E$ the tangent bundle $TM$.  By \cite{Yoccoz1995}, we can translate the hyperbolicity condition by using some cones. This is an open condition and we can extend these cones to a compact neighbourhood $\cv$ of $K(\ch)$ such that
\begin{itemize}
\item there exists a continuous splitting $E=E^1\oplus E^2$ on $\cv$ that coincides with $E=E^s\oplus E^u$ on $K(\ch)$ and two norms $|\cdot|_i$ on $E^i$ such that $$C_x=\{ v=v_1+v_2, v_1\in E_x^1, v_2\in E_x^2, |v_1|_{1, x}\leq |v_2|_{2, x}\};$$
the family $(C_x)_{x\in \cv}$ is the associated cone field; the dual cone field is the family $(C^*_x)_{x\in \cv}$ defined by $C_x^*=E_x\backslash {\rm int}C_x$.;
\item for some constant $c>1$, we have for every $x\in \cv$, $v_1\in E^1_x$ and $v_2\in E^2_x$
$$c^{-1}\| v_1+v_2\|\leq \max\{ |v_1|_{1, x}, |v_2|_{2,x}\}\leq c\| v_1+v_2\|_x.$$
\item there exists an integer $m\geq 1$ and a constants $\mu>1$ so that
\begin{enumerate}
\item for $x\in \cv$, $Df(C_x)\subset \widetilde C_{\mu,f(x)}$ where
$$\widetilde C_{\lambda,x}=\{ v=v_1+v_2\in E_x; \mu|v_1|_{1, x}\leq |v_2|_{2, x}\};$$
\item for $x\in \cv$, for $v\in C_x$, $\| Df^m(v)\|_{f^m(x)}\geq \mu.\| v\|_x$;
\item for $x\in \cv$, for $v\in C^*_x$, $\| Df^{-m}(v)\|_{f^{-m}(x)}\geq \mu.\| v\|_x$.
\end{enumerate}
\end{itemize}
We  define 
$$\ck(\cv)=\bigcap_{k\in\Z}f^k(\cv).$$
Then $\ck(\cv)$ is compact and hyperbolic. Let $\varepsilon>0$ be  a constant of expansivity , i.e. such that
$$\forall x, y\in\ck(\cv), \left(\forall k\in \Z, d(f^kx, f^ky)<\varepsilon\right)\Rightarrow x=y.$$

Choosing eventually a smaller neighbourhood, we can assume that the diameter of every connected component of $\cv$ is smaller than $\varepsilon$, and also that $\cv$ has a finite number $N$ of connected components that all meet $K(\cv)$.  

We denote by $\cC_1, \dots , \cC_N$ the connected components of $\cv$ and define the itinerary function $H:\ck(\cv)\rightarrow \Sigma_N$ by $f^{k}(x)\in\cC_{H(x)_k}$. Hence the $k$th component of $H(x)$ corresponds to the connected component of $\cv$ that contains $f^kx$. Then $H$ is continuous.
Because of the expansiveness property, $H$ is injective, so $H$ is a homeomorphism from $\ck(\cv)$ onto $H(\ck(\cv))\subset\Sigma_N$ such that
$$\forall x\in \ck(\cv), \sigma\circ H(x)=H\circ f(x).$$
\end{remk}
But in fact, we are looking for  Dynamics that are actually conjugate to a transitive subshift of finite type. In order to build such  Dynamics, we will be more precise for the choice of $\cv$  in  sub-section \ref{ssA2}.
\subsection{Rectangles partition}\label{ssA3}
Here we explain how  a good family of rectangles, called a rectangles partition, is useful to build a locally maximal invariant hyperbolic sets. We introduce  geometric Markov partition, that  are reminiscent from the Markov partition and that are studied in \cite{PaTa1987},   but as we didn't find the exact setting that we use elsewhere, we give some details.

We assume that $f:M\rightarrow M$ is a $C^1$ diffeomorphism and that $\cv\subset M$ is an open set endowed with two continuous families of open symmetric cones,  the unstable one $x\in \cv\mapsto C^u(x)\subset T_xM$ and the stable one $x\in \cv\mapsto C^s(x)\subset T_xM$ such that, if we denote the closure of a set $A$ by $\bar A$, we have for a constant  $\lambda\in (0, 1)$ 
\begin{itemize}
\item $\forall x\in \cv\cap f^{-1}(\cv), Df(\overline {C^u}(x))\subset C^u(f(x))\quad {\rm and}\quad Df(C^s(x))\supset \overline{C^s}(f(x))$;
\item $\forall x\in \cv, \forall v \in C^u(x), \| Df(x)v\|\geq \frac{1}{\lambda}\| v\|$ and\\
 $\forall x\in \cv, \forall v \in C^s(x), \| Df(x)v\|\leq {\lambda}\| v\|$;
 \item $\forall x\in \cv, C^u(x)\cap C^s(x)=\{\vec 0\}$.
\end{itemize}
\begin{defi}\begin{itemize}
\item  A $C^1$-embedding $\gamma:[a, b]\rightarrow \cv$ define a {\em unstable (resp. stable) curve} if $\forall t\in [a, b], \gamma'(t)\in C^u(\gamma(t))$ (resp. $\forall t\in [a, b], \gamma'(t)\in C^s(\gamma(t))$).
\item a {\em rectangle} $R$  is given by an embedding $\Phi_R: [0,1]^2\rightarrow R\subset \cv$ such that for every $t\in [0, 1]$, $\Phi_R(\{t\}\times [0, 1])$ (resp. $\Phi_R([0, 1]\times \{t\})$ ) defines a stable (resp. unstable) curve;
\item then the {\em stable (resp. unstable)} boundary of $R$ is $\partial^sR=\Phi_R (\{0, 1\}\times [0, 1])$ (resp. $\partial^uR=\Phi_R([0, 1]\times \{0, 1\})$;
\item a rectangle $R'$ is a {\em stable (resp. unstable) subrectangle} of a rectangle $R$ if $R'\subset R$ and $\partial^uR'\subset \partial^uR$ (resp. $\partial^sR'\subset \partial^sR$).
\end{itemize}

\end{defi}
\begin{remks}
\begin{enumerate}
\item Observe that a stable curve is always transversal to an unstable curve, and that when their mutual intersection with some rectangle is non-empty, then it is a point.
\item to a given rectangle $R$, we can associate different embeddings $\Phi_R$ and then different stable and unstable foliations $\cF^s(R)$ and $\cF^s(R)$;
\item the stable and unstable boundaries are independent from the embedding;
\item\label{pt4} when $\gamma\subset\cv$ is a unstable (stable) curve, every connected component of $f(\gamma)\cap \cv$ (resp. $f^{-1}(\gamma)\cap \cv$) is also an unstable (resp. stable) curve;
\end{enumerate}
\end{remks}
 Let us now introduce the notion of rectangles partition that we will use.

\begin{defi}
A {\em rectangles partition} is a finite set $\{ \cR_1, \dots, \cR_m\}$ of disjoint rectangles of $\cv$ such that, if we use the notation $\cR_{jk}=f(\cR_j)\cap \cR_k$, we have
\begin{itemize}
\item for every $j, k\in\{ 1, \dots, m\}$, either $\cR_{jk}=\emptyset$ or $\cR_{jk}$ is an unstable subrectangle of $\cR _k$. When $\cR_{jk}\not=\emptyset$, we use the notation 
$$\cR_j\xrightarrow[]{f} \cR_k,$$
and we say that we have a {\em transition} from $\cR_j$ to $\cR_k$;
\item when $\cR_{jk}\not=\emptyset$, then $f(\partial^u\cR_j)\cap\partial^u\cR_k=\emptyset$ and $f(\partial^s\cR_j)\cap\partial^s\cR_k=\emptyset$.
\end{itemize}
An {\em admissible sequence} is then $(i_k)_{k\in\Z}\in \{1, \dots , m\}^\Z=\Sigma_m$ such that
$$\forall k\in\Z, \cR_{i_k}\xrightarrow[]{f} \cR_{i_{k+1}}.$$

\end{defi}
\begin{remk}
Observe that $\cR_j\xrightarrow[]{f} \cR_k$ if and only if $\cR_k\xrightarrow[]{f^{-1}} \cR_j$ (the stable boundary for $f^{-1}$ is then the unstable one for $f$).
\end{remk} 

\begin{nota}
We denote by $\Lambda(\cR_1, \dots, \cR_m)$ the maximal invariant set that is contained in $\cR_1\cup\dots \cup \cR_m$, i.e.
$$\Lambda(\cR_1, \dots, \cR_m)=\bigcap_{k\in\Z}f^{k}(\cR_1\cup\dots \cup \cR_m).$$
\end{nota}
Observe that this set is hyperbolic. Hence there exist a stable and an unstable submanifold at every of its points. We even have the following result.
\begin{proposition}\label{Pstable}
If $x\in \Lambda(\cR_1, \dots, \cR_m)\cap \cR_{i_0}$, then the connected component of $W^s(x)\cap \cR_{i_0}$ (resp. $W^u(x)\cap \cR_{i_0}$) that contains $x$ is a stable (resp. unstable) curve that joins the two connected components of $\partial^u\cR_{i_0}$ (resp. $\partial^s\cR_{i_0}$).
\end{proposition}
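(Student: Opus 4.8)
The plan is to exploit the hyperbolicity of $\Lambda(\cR_1,\dots,\cR_m)$ together with the geometry of the rectangles partition, tracking how the local stable and unstable manifolds of $x$ intersect the rectangles along the orbit. Fix $x\in\Lambda(\cR_1,\dots,\cR_m)$ with admissible itinerary $(i_k)_{k\in\Z}$, so $f^k(x)\in\cR_{i_k}$ for all $k$; I will prove the statement for $W^s$, the unstable case being symmetric (it is the stable case for $f^{-1}$, which also admits a rectangles partition by the remark that $\cR_j\xrightarrow{f}\cR_k$ iff $\cR_k\xrightarrow{f^{-1}}\cR_j$). Let $\gamma_0$ be the connected component of $W^s(x)\cap\cR_{i_0}$ containing $x$. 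First I would record that any such $\gamma_0$ is automatically a stable curve: near $x$ the manifold $W^s_{\mathrm{loc}}(x)$ is tangent to $E^s(x)\subset C^s(x)$, and more generally every tangent vector to $W^s(x)\cap\cv$ lies in the stable cone field, because $W^s(x)$ is obtained as a graph transform limit and the cone condition $Df(C^s(x))\supset\overline{C^s}(f(x))$ is preserved under backward iteration; hence $\gamma_0$, being a connected arc of $W^s(x)$ inside $\cv$, is a $C^1$ stable curve.

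Next I would show $\gamma_0$ cannot have an endpoint in the interior of $\cR_{i_0}$, and cannot touch $\partial^s\cR_{i_0}$; the only remaining possibility is that its two ends reach $\partial^u\cR_{i_0}$, and I must show it reaches \emph{both} components. The key mechanism is forward iteration combined with contraction: for each $n\ge 0$ let $\gamma_n$ be the connected component of $W^s(f^n(x))\cap\cR_{i_n}$ containing $f^n(x)$. Because $\cR_{i_{n-1}}\xrightarrow{f}\cR_{i_n}$, the set $\cR_{i_{n-1},i_n}=f(\cR_{i_{n-1}})\cap\cR_{i_n}$ is an \emph{unstable} subrectangle of $\cR_{i_n}$, so $\partial^s(\cR_{i_{n-1},i_n})\subset\partial^s\cR_{i_n}$ while its unstable boundary sits in the interior $\partial^u$-wise; moreover the partition hypothesis $f(\partial^s\cR_{i_{n-1}})\cap\partial^s\cR_{i_n}=\emptyset$ and $f(\partial^u\cR_{i_{n-1}})\cap\partial^u\cR_{i_n}=\emptyset$ guarantees that a stable curve in $\cR_{i_{n-1}}$ whose ends lie on $\partial^u\cR_{i_{n-1}}$ maps under $f$ to a stable curve in $\cR_{i_n}$ whose ends still lie on $\partial^u\cR_{i_n}$ — it "crosses" the rectangle in the stable direction and its image still crosses. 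I would run this as an induction: if $\gamma_0$ crosses $\cR_{i_0}$ from one component of $\partial^u\cR_{i_0}$ to the other, then $f(\gamma_0)\cap\cR_{i_1}$ contains a curve crossing $\cR_{i_1}$, etc. The converse direction — which is what I actually need — is the more delicate one: I want to deduce that $\gamma_0$ crosses, not assume it. For that I argue by contradiction and contraction. Suppose $\gamma_0$ fails to reach one of the two components of $\partial^u\cR_{i_0}$. Then $\gamma_0$ has an endpoint $z$ with $z\notin\partial\cR_{i_0}$, i.e. $z\in\interior\cR_{i_0}$ (it cannot be on $\partial^s\cR_{i_0}$ since a stable curve is transverse to $\partial^s$, which is a union of unstable curves, so it would exit the rectangle through $\partial^u$ before reaching $\partial^s$; and $\gamma_0\subset W^s(x)$ has no boundary inside $W^s(x)$). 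But $z\in W^s(x)$, hence $z\in\Lambda$ is impossible only if $z$ leaves $\cv$ under some iterate; since $z\in\interior\cR_{i_0}$ and $d(f^k(z),f^k(x))\to 0$, for all large $k$ the point $f^k(z)$ is in the same component $\cC$ of $\cv$ as $f^k(x)$, and in fact in $\cR_{i_k}$, which forces (by the graph transform / crossing argument run backwards from large $k$) the component of $W^s(x)\cap\cR_{i_0}$ through $x$ to extend past $z$ — contradiction with $z$ being an endpoint. Thus both ends of $\gamma_0$ lie on $\partial^u\cR_{i_0}$; a connectedness/transversality argument (a stable curve meeting one component of $\partial^u\cR_{i_0}$ only, with both endpoints there, would bound together with part of $\partial^u$ a disc contradicting that $\partial^s\cR_{i_0}$ separates) then shows the two endpoints lie on the two \emph{different} components.

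The main obstacle I anticipate is making the contradiction in the previous paragraph fully rigorous: precisely controlling, purely from the cone and transition conditions (without invoking a global stable foliation or shadowing), that the local stable manifold of $x$ "propagates" all the way across $\cR_{i_0}$ rather than stopping short in the interior. The clean way is to build $W^s_{\mathrm{loc}}$ through $x$ relative to this partition directly as a graph over the stable cone: define, for each $n$, the set of stable curves in $\cR_{i_0}$ crossing from one $\partial^u$ component to the other whose forward image up to time $n$ stays in $\cR_{i_1},\dots,\cR_{i_n}$ and crosses each; show this family is nonempty (pull back a full crossing stable curve in $\cR_{i_n}$ by $f^{-n}$, using point (4) of the Remarks that preimages of stable curves are stable curves and the transition conditions to keep the endpoints on $\partial^u$), and that it is nested and uniformly contracting in the stable direction with rate $\lambda$, so it converges to a single crossing stable curve $\gamma_\infty\ni x$ contained in $W^s(x)$; then $\gamma_\infty\subset\gamma_0$, and since $\gamma_\infty$ already crosses $\cR_{i_0}$, so does $\gamma_0$, giving the claim. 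Once this construction is in place the unstable statement follows by applying it to $f^{-1}$, and the proposition is proved.
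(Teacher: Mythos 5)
Your final construction is correct, but it takes a genuinely different route from the paper. The paper's proof is two lines long: since $\Lambda(\cR_1,\dots,\cR_m)$ is uniformly hyperbolic, every branch of $W^s(y)$ for $y$ in this set has length bounded below by a uniform $\varepsilon>0$; choosing $N$ so that $\varepsilon/\lambda^N$ exceeds the size of the rectangles and pulling back the local stable manifold of $f^N(x)$ by $f^{-N}$ (which stretches stable curves by at least $\lambda^{-N}$) yields a stable arc of $W^s(x)$ through $x$ long enough that it must join the two components of $\partial^u\cR_{i_0}$. You instead build the crossing curve from scratch as the decreasing intersection $\bigcap_{n\geq 0}f^{-n}(\cR_{i_n})$ of stable subrectangles of $\cR_{i_0}$, show its unstable width contracts like $\lambda^n$ so that the limit is a single crossing stable curve, and place it inside $W^s(x)$ by forward contraction along stable curves that never leave the rectangles. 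This limit is exactly the set $D^s_\infty$ of Lemma \ref{LA1}; the paper proves that lemma by \emph{citing} Proposition \ref{Pstable}, whereas you prove the two statements at once. The paper's route buys brevity at the cost of invoking the uniform size of local invariant manifolds over a hyperbolic set; your route buys self-containedness (only the cone conditions and the transition combinatorics are used) and removes the mild awkwardness of Lemma \ref{LA1} leaning on the proposition it essentially reproves. Two small repairs: delete your intermediate argument by contradiction (the endpoint $z$ in the interior), which you yourself flag as non-rigorous and which your final construction supersedes; and weaken your opening claim that \emph{every} tangent vector to $W^s(x)\cap\cv$ lies in the stable cone --- the backward cone-invariance argument applies only to points whose forward orbit stays in $\cv$ for the relevant iterates, which is automatic for your curve $\gamma_\infty$ but not for an arbitrary arc of $W^s(x)$ wandering back into $\cR_{i_0}$, and it is this restricted form that identifies $\gamma_\infty$ with the connected component of $W^s(x)\cap\cR_{i_0}$ through $x$.
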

\begin{proof}
As $\Lambda(\cR_1, \dots, \cR_m)$ is hyperbolic, there exists $\varepsilon>0$ such that for every $x\in\Lambda(\cR_1, \dots, \cR_m)$, the length of  every branch of $W^s(x)$ is greater than $\varepsilon$. We denote by $\cm>0$  a lower bound of the length of the stable curves contained in one $\cR_{j_0}$ that join the two components of $\partial^u \cR_{j_0}$. Then  we choose $N\geq 1$ such that $\frac{\varepsilon}{\lambda^N}>\cm$. Then if $j_0$ is such that  $f^N(x)\in \cR_{j_0}$, the curve  $f^{-N}(W^s(f^N(x))\cap \cR_{j_0})$ is contained in $W^s(x)$ and crosses the two connected components of $\partial^u\cR_{i_0}$. This gives the wanted result.
\end{proof}
Different versions of the following proposition exist in different setting. We will provide a proof for the convenience of the reader.
\begin{proposition}\label{PMarkov}
Let $\{ \cR_1, \dots, \cR_m\}$ be a rectangle partition for $f$ in $\cv$. Let $(i_k)_{k\in \Z}\in\Sigma_m$ be a sequence. The two following assertions are equivalent.
\begin{itemize}
\item $(i_k)_{k\in\Z}$ is an admissible sequence;
\item there exists a unique point $x\in \cR_{i_0}$ such that
$$\forall k\in\Z, f(x)\in \cR_{i_k}.$$
\end{itemize}
\end{proposition}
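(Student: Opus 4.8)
The plan is to prove the two implications separately, using Proposition \ref{Pstable} as the geometric engine. For the implication "admissible $\Rightarrow$ existence and uniqueness," I would argue by a nested-intersection argument on rectangles. Fix an admissible sequence $(i_k)_{k\in\Z}$. First I would show that for each $n\geq 0$ the set
$$
R_n := \cR_{i_0}\cap \bigcap_{k=1}^{n} f^{k}(\cR_{i_{k-1}}\cap f^{-1}\cR_{i_k})\cap\dots
$$
is a nonempty unstable subrectangle of $\cR_{i_0}$, and dually the backward intersections give a nonempty stable subrectangle. Concretely, the key inductive step is: if $S$ is an unstable subrectangle of $\cR_{i_0}$ and $\cR_{i_{-1}}\xrightarrow{f}\cR_{i_0}$, then $f(\cR_{i_{-1}})\cap S$ is again an unstable subrectangle of $\cR_{i_0}$, since $\cR_{i_{-1},i_0}=f(\cR_{i_{-1}})\cap\cR_{i_0}$ is an unstable subrectangle of $\cR_{i_0}$ and intersecting two unstable subrectangles of the same rectangle (whose unstable boundaries both lie in $\partial^u\cR_{i_0}$) yields an unstable subrectangle; the hypothesis $f(\partial^u\cR_{i_{-1}})\cap\partial^u\cR_{i_0}=\emptyset$ is what guarantees the intersection is "full in the unstable direction" and does not degenerate. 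Pushing this forward, the forward part of the itinerary cuts $\cR_{i_0}$ down to a decreasing sequence of unstable subrectangles $U_0\supset U_1\supset\cdots$, and the backward part to a decreasing sequence of stable subrectangles $S_0\supset S_1\supset\cdots$. Because an unstable subrectangle always meets a stable subrectangle of the same rectangle in exactly one point (Remark on transversality after the definition of rectangle, together with Proposition \ref{Pstable}), each $U_n\cap S_m$ is nonempty; and by hyperbolicity the "width" of $U_n$ in the stable direction and of $S_m$ in the unstable direction shrink geometrically (factor $\lambda$ per step, as in the proof of Proposition \ref{Pstable}), so $\bigcap_n U_n$ is a single stable curve, $\bigcap_m S_m$ is a single unstable curve, and their intersection is the unique point $x$ with $f^k(x)\in\cR_{i_k}$ for all $k$.

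For the converse implication "existence $\Rightarrow$ admissible," this is the easy direction: if $x\in\cR_{i_0}$ satisfies $f^k(x)\in\cR_{i_k}$ for all $k$, then in particular $f^k(x)\in\cR_{i_k}\cap f^{-1}(\cR_{i_{k+1}})$, so applying $f$ we get $f^{k+1}(x)\in f(\cR_{i_k})\cap\cR_{i_{k+1}}=\cR_{i_k,i_{k+1}}\neq\emptyset$, hence $\cR_{i_k}\xrightarrow{f}\cR_{i_{k+1}}$ for every $k$, i.e.\ the sequence is admissible. Uniqueness of $x$ given admissibility has already been extracted in the previous paragraph from the shrinking-width estimate, but it can also be seen directly from expansivity of $\Lambda(\cR_1,\dots,\cR_m)$: two such points would have their entire orbits staying within distance $\mathrm{diam}(\cR_{i_k})$ of each other, and after passing to a sufficiently fine partition (or invoking the expansivity constant $\varepsilon$ from Appendix \ref{Appa}) this forces them to coincide.

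The main obstacle is the geometric bookkeeping in the first implication: making precise that intersecting nested (un)stable subrectangles stays an (un)stable subrectangle, and that the transversality/non-tangency conditions built into the definition of rectangles partition ($f(\partial^u\cR_j)\cap\partial^u\cR_k=\emptyset$ and the stable analogue) are exactly what prevent the nested intersections from collapsing prematurely or from ceasing to stretch fully across $\cR_{i_0}$. I would handle this by first recording a short lemma: the intersection of two unstable subrectangles $R',R''$ of a rectangle $R$, provided $\partial^u R'\cap \partial^u R''=\emptyset$ as subsets of $\partial^u R$ (automatic here from the partition hypothesis after pulling back by $f$) and both meet every stable leaf of $R$, is again an unstable subrectangle of $R$; its proof is the same coordinate argument as Proposition \ref{Pstable}, working in the chart $\Phi_R$. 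With that lemma in hand, the induction and the geometric convergence are routine, and the two bullet points of the proposition follow.
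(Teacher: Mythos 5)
Your plan follows essentially the same route as the paper's proof: nested intersections of stable and unstable subrectangles determined by the forward and backward itineraries, non-emptiness of the decreasing compacts $D^u_n\cap D^s_n$, and uniqueness from the $\lambda$-contraction forcing the limiting sets $D^u_\infty$, $D^s_\infty$ to be single transversal curves crossing $\cR_{i_0}$ that meet in exactly one point (this is the paper's Lemma \ref{LA1}, combined with Proposition \ref{Pstable}), with your explicit treatment of the easy converse and of the inductive ``intersection of unstable subrectangles is an unstable subrectangle'' step being, if anything, slightly more careful than the paper's. The only thing to repair when writing it up is a swapped labelling in your narrative --- the forward itinerary produces \emph{stable} subrectangles $\bigcap_{k=0}^{n} f^{-k}(\cR_{i_k})$ and the backward itinerary produces \emph{unstable} ones, so $\bigcap_n U_n$ is an unstable curve rather than a stable one --- but since your key inductive step is stated with the correct orientation this is purely a bookkeeping slip, not a gap.
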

\begin{proof}
We just prove the direct implication, the only one that is non-trivial. Hence we assume that $(i_k)_{k\in\Z}$ is an admissible sequence. 

We begin by proving the existence of $x$. For every $n\in\N$, we introduce the notation
 $$D_n^s=\bigcap_{k=0}^nf^{-k}(\cR_{i_k})\quad{\rm and}\quad D_n^u=\bigcap_{k=0}^nf^{k}(\cR_{i_{-k}}).$$
 Then $(D_n^u)_{n\in\N}$ (resp. $(D_n^s)_{n\in\N}$) is a decreasing sequence of unstable (resp. stable) rectangles of $\cR_{i_0}$. Hence $(K_n)_{n\in \N}= (D_n^u\cap D_n^s)_{n\in \N}$ is a decreasing sequence of non-empty compact subsets of $\cR_{i_0}$. Their intersection contains at least one point $x$, and this point satisfies
 $$\forall k\in\Z, f^{k}(x)\in \cR_{i_k}.$$
 
 We now want to prove the unicity of $x$. We introduce the notation $$D_\infty^u=\bigcap_{n\in\N} D_n^u\quad{\rm  and} \quad D_\infty^s=\bigcap_{n\in\N}D_n^s.$$
 \begin{lemma}\label{LA1}
 $D_\infty^u$ (resp. $D_\infty^s$) is an unstable curve that joins the two connected components of $\partial^sR_{i_0}$ (resp. $\partial^uR_{i_0}$). More precisely, if $\{ x\}=D_\infty^u\cap D_\infty^s$, then $D_\infty^u\subset W^u(x)$ and  $D_\infty^s\subset W^s(x)$.
 \end{lemma}
% The conclusion of the proposition is then an easy consequence of the lemma, because such an unstable and stable curves intersect exactly once.
 
 Let us prove   Lemma \ref{LA1}. We just prove the result for $D_\infty^s$. As every $D_n^s$ is a stable rectangle, $D_\infty^s$ is a connected compact set that joins the two connected components of $\partial^uR_{i_0}$.\\
  To prove that it is a (at least continuous) curve, we just need to prove that it intersects every leaf of the unstable foliation $\cF^u(\cR_{i_0})$ of $\cR_{i_0}$ at most once. So let $\cl^u$ be an unstable leaf of  $\cR_{i_0}$ and let $x$, $y$ be two points of $D_\infty^s\cap \cl^u$. We denote by $\cl^u[x,y]$  the arc of $\cl^u$ that has for endpoints $x$ and $y$. Observe that $\cl^u[x,y]\subset \cR_{i_0}$ Then for every $n\in\N$,  the connected component $\cl_n$ of $f^{n}(\cl^u)\cap \cR_{i_n}$ that contains $f^{n}(\cl^u[x,y])$\footnote{Observe that the endpoints of this curve are indeed in $\cR_{i_n}$ and hence  by the point (\ref{pt4}) of the remark, $f^{n}(\cl^u[x,y])\subset \cR_{i_n}$.
  } is an unstable curve of $\cR_{i_n}$. Let $\cb$ a common upper bound of the lengths of the unstable leaves that are contained in some rectangle of the Markov partition (observe that these curves are uniformly Lipschitz graphs in the charts $\Phi_{\cR_i}$). Then we have ${\rm length}(\cl_n)\leq \cb$ and we deduce
 $\forall n\in \N, {\rm length}(\cl^u[x,y])={\rm length}(f^{-n}(\cl_n)) \leq \lambda^n\cb$. So $x=y$ and $D_\infty^s$  intersects every unstable leaf at most once, and so exactly once because $D_\infty^s$ is a connected  set that joins the two connected components of $\partial^uR_{i_0}$. 
 
 Moreover, observe that $D_\infty^s$  contains the connected component $C^s$ of $W^s(x)\cap \cR_{i_0}$ that contains $x$. This implies that $D_\infty^s=C^s$ is a smooth stable curve (see Proposition \ref{Pstable}).

\end{proof}

\subsection{Precise construction of heteroclinic horseshoes}\label{ssA2} We use the same notations as in subsection \ref{ssA1}.

\begin{remk}
As explained before, we want to build an invariant set that is close (for the Hausdorff distance) to $K(\ch)$. That is why we need to use all the heteroclinic intersections that are in $K(\ch)$ in our construction. Another approach could be to use the transitivity of the relation $\cR$ defined on $q$-periodic points by: $x\cR y$ if $W^s(x,f)$ and $W^u(y,f)$ have a transverse heteroclinic intersection.  This implies that every periodic point in $K(\ch)$ has a homoclinic intersection and thus we could use directly Smale's method (see \cite{Sma1965}) to build a homoclinic horseshoe. Unfortunately, a neighbourhood of this homoclinic orbit is not necessarily a neighbourhood of the whole $K(\ch)$ and so this horseshoe is in general not close to $K(\ch)$ for the Hausdorff distance, so doesn't give us what we want.
\end{remk}

\begin{theorem}
 There exists $N\geq 1$ and a neighbourhood $\cN$ of the cycle $K(\ch)$ of transverse heteroclinic intersections with  period $q$, such that the maximal $f^{qN}$ invariant set contained in $\cN$ is a horseshoe $\Lambda$ for $f^{qN}$ (see Definition \ref{Defhs}).
 \end{theorem}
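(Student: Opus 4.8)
The plan is to construct an explicit rectangles partition near the heteroclinic cycle $K(\ch)$, so that Proposition \ref{PMarkov} immediately yields a conjugacy between the maximal invariant set and a subshift of finite type, and then to check that the subshift is transitive and non-trivial. The idea is as follows. Fix the saddle $q$-periodic points $x_1,\dots,x_{qn}$ of the cycle (cyclically ordered, with the orbit structure from Definition) and work with $g=f^q$, so that each $x_i$ is a fixed saddle of $g$ and each $y_j^k$ is a transverse heteroclinic point of $g$. Around each $x_i$ I would take a small rectangle $\cR_i^0$ bounded by local stable and unstable curves through $x_i$ (using the cone field $C^u, C^s$ extended to a neighbourhood $\cv$ of $K(\ch)$, as in subsection \ref{ssA1}); such a rectangle, after iterating $g$ a large number $N$ of times, behaves like the usual Smale rectangle near a saddle. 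Around each heteroclinic point $y_j^k$ I would take a small rectangle $\cR_{j}^{k}$ whose stable boundary lies near $W^s(x_{k+1})$ and whose unstable boundary lies near $W^u(x_k)$, i.e. a "transition box" joining the neighbourhood of $x_k$ to that of $x_{k+1}$.

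The key step is the $\lambda$-lemma (inclination lemma): for $N$ large, $g^N$ stretches the rectangle $\cR_k^0$ near $x_k$ across each transition box $\cR_j^k$ ($j=1,\dots,n_k$) as an unstable subrectangle, and $g^N$ stretches each transition box $\cR_j^k$ across $\cR_{k+1}^0$ as an unstable subrectangle. After a uniform rescaling of $N$ (take the same $N$ for all the finitely many boxes) one obtains that the finite family $\{\cR_i^0\}\cup\{\cR_j^k\}$ is a rectangles partition for $F:=g^N=f^{qN}$ in the sense of subsection \ref{ssA3}, with transitions exactly $\cR_k^0\to\cR_j^k$, $\cR_j^k\to\cR_{k+1}^0$, and $\cR_i^0\to\cR_i^0$ (the self-transition at each saddle). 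One must also arrange the boundary-disjointness conditions $F(\partial^u\cdot)\cap\partial^u(\cdot)=\emptyset$ and $F(\partial^s\cdot)\cap\partial^s(\cdot)=\emptyset$, which is a generic adjustment of the sizes of the rectangles (shrink the unstable direction of each box slightly so images overshoot, as in the standard construction). Then by Proposition \ref{PMarkov} the set $\Lambda(\{\cR_i^0\}\cup\{\cR_j^k\})=\bigcap_{m\in\Z}F^m(\bigcup \text{rectangles})$ is conjugate to the subshift $\sigma_A$ on the admissible sequences of this transition graph.

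It remains to identify a neighbourhood $\cN$ of $K(\ch)$ whose maximal $F$-invariant subset is exactly this $\Lambda$, and to verify that $\sigma_A$ is a non-trivial transitive subshift of finite type. For the first point, take $\cN$ to be the union of the (open, disjoint) rectangles; its maximal $F$-invariant subset is $\Lambda$ by the very definition of $\Lambda(\cR_1,\dots,\cR_m)$, and by making the rectangles small $\cN$ is an arbitrarily small neighbourhood of $K(\ch)$ since every $x_i$ and $y_j^k$ is covered and the whole cycle is contained in $\bigcup$ rectangles. For transitivity: the transition graph has a vertex for each saddle $x_i$ and for each heteroclinic point, with an edge from the $x_k$-vertex to each $\cR_j^k$-vertex and from each $\cR_j^k$-vertex to the $x_{k+1}$-vertex; since the $x_i$ are cyclically ordered with $n_k\ge 1$ transitions at each stage, the graph is strongly connected, hence $\sigma_A$ is transitive. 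Non-triviality (positive entropy, more than one orbit) follows because somewhere along the cycle $n_k\ge 1$ but in fact the loop through the cycle can be combined with loops at the saddles, or more simply because at least one stage has $n_k\ge 1$ giving a choice, and the self-loops at the $x_i$ already force infinitely many distinct orbits; in any case one checks $\sigma_A$ is not a single periodic orbit. This shows $\Lambda$ is a horseshoe for $f^{qN}$ in the sense of Definition \ref{Defhs}, with $N$ replaced by $qN$.

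The main obstacle I expect is the $\lambda$-lemma step carried out uniformly and with the boundary conditions: one needs the single integer $N$ to simultaneously work for the self-transition at every saddle and for every transition box, and one must be careful that the images under $F$ of the rectangles are genuine unstable subrectangles (full width in the unstable direction, with unstable boundary landing on unstable boundary) and that no stable boundary maps onto a stable boundary. This is a finite, but somewhat delicate, geometric bookkeeping; it is essentially the classical heteroclinic-cycle horseshoe construction (cf. \cite{PaTa1987}, \cite{Sma1965}), adapted to the cyclic family, and the cone-field formalism of subsection \ref{ssA1} together with Propositions \ref{Pstable} and \ref{PMarkov} is exactly what makes it go through.
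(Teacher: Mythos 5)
Your proposal is correct and follows essentially the same route as the paper: build a rectangles partition near the cycle, use the inclination lemma with a single large $N$ to produce the required transitions, and invoke Proposition \ref{PMarkov} to conjugate the maximal invariant set to a non-trivial transitive subshift of finite type. The only difference is cosmetic: you place separate Markov boxes at the heteroclinic points $y_j^k$, whereas the paper keeps all the pieces inside the boxes $R_k$ around the periodic points (taking the relevant components of $R_k\cap f^{qN}(R_k)$ and $R_{k+1}\cap f^{qN}(R_k)$), which changes the transition graph slightly but not the conclusion --- and your construction shares the same loose points as the paper's (the precise identification of $\cN$ as a genuine neighbourhood of $K(\ch)$, and the control of possible extra transitions).
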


As $K(\ch)$ is (uniformly) hyperbolic, we can chose a neighbourhood $\cv$ of $K(\ch)$, a constant $\lambda\in (0, 1)$ and two continuous families of open symmetric cones (see subsection \ref{ssA1}) the unstable one $x\in \cv\mapsto C^u(x)\subset T_xM$ and the stable one $x\in \cv\mapsto C^s(x)\subset T_xM$ such that, if we denote the closure of a set $A$ by $\bar A$, we have
\begin{itemize}
\item $\forall x\in \cv\cap f^{-1}(\cv), Df(\overline {C^u}(x))\subset C^u(f(x))\quad {\rm and}\quad Df(C^s(x))\supset \overline{C^s}(f(x))$;
\item $\forall x\in \cv, \forall v \in C^u(x), \| Df(x)v\|\geq \frac{1}{\lambda}\| v\|$ and\\
 $\forall x\in \cv, \forall v \in C^s(x), \| Df(x)v\|\leq {\lambda}\| v\|$;
 \item $\forall x\in \cv, C^u(x)\cap C^s(x)=\{\vec 0\}$.
\end{itemize}

%In what follows, all the rectangles we consider have smooth sides that are stable or unstable, which means that every tangent vector to this side is in the stable (resp. unstable) cone.

\begin{nota}
For every $x_k$, we denote by $B^s(x_k)$ the branch of $W^s(x_k)$ that contains the $y^{k-1}_i$s and by $B^u(x_k)$ the branch of $W^u(x_k)$ that contains the $y^{k}_i$s.\\
Then we choose a small (curved) rectangle $R_k$ with two sides on $B^s(x_k)$ and $B^u(x_k)$. 

\begin{center}
\includegraphics[width=10cm]{smallrectangle.pdf}
\end{center}

%We also choose small rectangles along the (finite) parts of the orbits of the $y_1^k, \dots, y^k_{n_k}$ that are not contained in $R_k$ and $R_{k+1}$.

%\begin{center}
%\includegraphics[width=10cm]{Markov.pdf}
%\end{center}

We denote by $\delta_k^u$ and $\delta_k^s$ the size of $R_k$ along $B^u(x_k)$ and $B^s(x_k)$.  %The sides that are in the unstable (resp. stable) direction will be called unstable (respectively unstable) sides, and a subrectangle of $R_k$ is said to be stable (resp. unstable) if its unstable (resp. stable) sides are parts of the unstable (resp. stable) sides of $R_k$.
\end{nota}
Then we look at the Poincar\'e map for $f^q$ from $R_k$ onto $R_{k+1}$. Adjusting the quantities $\delta^u$ and $\delta^s$, we can find some $N_k$ such that $f^{qN_k}(R_k)\cap R_{k+1}$ contains the union of a finite numbers of unstable rectangles. There are two cases.
\begin{itemize}
\item when $n=q=1$, there are $n_0+1$ rectangles: $R_0^0$ that contains $x_0$ and  $R_{0}^1$, $R_{0}^2$, \dots, $R_{0}^{n_{0}}$ such that $R_{0}^i$ is a connected component of $f^{qN_0}(R_0)\cap R_{0}$ that meets $W^s_{\rm loc}(x_{0})$ at some point of the orbit of $y_i^0$;
\item when $nq>1$, there are $n_{k}$ unstable sub-rectangles of $R_{k+1}$ that we denote by $R_{k+1}^1$, $R_{k+1}^2$, \dots, $R_{k+1}^{n_{k}}$ such that $R_{k+1}^i$ is a connected component of $f^{qN_k}(R_k)\cap R_{k+1}$ that meets $W^s_{\rm loc}(x_{k+1})$ at some point of the orbit of $y_i^k$.\footnote{Observe that $f^{qN_k}(R_k)\cap R_{k+1}$ can have other connected components, for example connected components that correspond to other heteroclinic intersections. We just work with some chosen heteroclinic points}
\end{itemize} 
\begin{center}
\includegraphics[width=10cm]{Poincarerectangle.pdf}
\end{center}
When we decrease $\delta_k^u$ or $\delta_{k+1}^s$, then $N_k$ increases and when we decrease $\delta_k^s$ or $\delta_{k+1}^u$, then $N_k$ doesn't change.   Hence, if we eventually decrease the $\delta_k^u$'s, we can assume that all the $N_k$ are equal to some constant integer that we denote by $N$. 
Let us denote by $R_k^0$ the connected component of $R_k\cap f^{qN}(R_k)$ that contains $x_k$  and let us prove that it is disjoint from the $R_k^i$ for $1\leq i\leq n_k$. There are two cases.
\begin{itemize}
\item there is only one fixed point in the heteroclinic cycle, i.e. $q=n=1$; in this case the rectangles $R_1^i$ are different connected components of $R_1\cap f^N(R_1)$ and so they are disjoint;
\item if not, as the different $R_k$'s are disjoint, in particular $f^{qN}(R_k)$ and $f^{qN}(R_{k-1})$ are disjoint and every unstable rectangle that is contained in $R_k\cap f^{qN}(R_k)$ is disjoint from $\displaystyle{\bigcup_{i=1}^{n_{k-1}} R_k^i}$.
\end{itemize}
\begin{center}
\includegraphics[width=10cm]{thinrectangles.pdf}
\end{center}
We introduce the notation $\displaystyle{\ct_k=\bigcup_{i=0}^{n_k}R_k^i}$ and consider now the $f^{qN}$-invariant set
$$\Lambda=\bigcap_{j\in\Z} f^{jqN}(\bigcup_{k=1}^{qn}\ct_k).$$
Then the $R_k^j$s with $1\leq k\leq nq$ and $0\leq j\leq n_k$ define a rectangle partition   for $f^{qN}_{|\cv}$ and the following  transitions occur\footnote{We don't know if other transitions occur.}\begin{itemize}
\item $\forall i\in [0, n_k],R_k^i\xrightarrow[]{f^{qN}} R_k^0$; 
\item $\forall i\in [0, n_k],\forall j\in [1, n_{k+1}],  R_k^i\xrightarrow[]{f^{qN}} R_{k+1}^j$;
\end{itemize}
We denote by $A$ the associated matrix. Observe that for every $R_k^i$, $R_h^j$, then $R_k^i$ can be connected to  $R_h^j$ by a succession of such transitions. We deduce from Proposition \ref{PMarkov} that $f^{Nq}_{|\Lambda}$ is conjugate to the subshift associated to $A$, that is transitive. In particular, $f^{Nq}_{|\Lambda}$ is mixing, has an infinity of periodic points and has positive topological entropy.  

\begin{remks}
\begin{itemize}
\item If we decrease the constants $\delta_k^u$ and $\delta_k^s$, then we increase $N$ but this is not a problem because we just add some iterations of $f^q$ that are close to the periodic orbits where we know exactly how the Dynamics looks like. An advantage is that decreasing sufficiently these constants, we can be sure that $\displaystyle{\bigcup_{j=0}^{qN}       f^j\left(  \bigcup_{k=1}^{qn}\ct_k\right)}$ is contained in a small neighbourhood of the heteroclinic cycle $K(\ch)$. So in this case, the Hausdorff distance between $K(\ch)$ and the invariant set $\displaystyle{\bigcup_{j=1}^{qN}f^j(\Lambda)}$ is also as small as we want;
\item being defined by a rectangle partition, the set $\Lambda$ is a locally maximal invariant set by $f^{qN}$.
\end{itemize}
\end{remks}

\bibliographystyle{amsplain}

\begin{thebibliography}{10}

\bibitem{Arna 2016} M.-C.~Arnaud, {\em Hyperbolicity for conservative twist maps of the 2-dimensional annulus},  Publ. Mat. Urug. 16 (2016), 1--39.

\bibitem{ALC2017} M.-C.~Arnaud \& P.~Le Calvez, 
{\em A notion of Denjoy sub-system},  (English, French summary)
C. R. Math. Acad. Sci. Paris 355 (2017), no. 8, 914--919. 

% \bibitem{ArZa2018} M.-C.~Arnaud \& M.~Zavidovique. On the transversal dependence of weak K.A.M. solutions for symplectic twist maps. preprint hal-01871436

\bibitem{ALD1983} S.~Aubry \& P.~Y.~Le Daeron. {\em The discrete Frenkel-Kontorova model and its extensions. I. Exact results for the ground-states}, Phys. D 8 (1983), no. 3, 381--422. 

 \bibitem{Ban1988} V.~Bangert,  {\em  Mather sets for twist maps and geodesics on tori},  Dynamics reported, Vol. 1, 1--56, Dynam. Report. Ser. Dynam. Systems Appl., 1, Wiley, Chichester, 1988.
 
 \bibitem{Bo2000} T.~Bousch,  
{\em Le poisson n'a pas d'ar\^etes.} (French. English, French summary) [The fish has no bones]
Ann. Inst. H. Poincar\'e Probab. Statist. 36 (2000), no. 4, 489--508. 
 
 \bibitem{BurnWei1995} K.~Burns \& H.~Weiss
{\em A geometric criterion for positive topological entropy.}
Comm. Math. Phys. {\bf 172} (1995), no. 1, 95--118. 

\bibitem{Chen1985} A.~Chenciner, {\it La dynamique au voisinage d'un point fixe elliptique conservatif: de Poincar\'e et Birkhoff \`a Aubry et Mather}. (French) [The Dynamics at the neighborhood of a conservative elliptic fixed point: from Poincar\'e and Birkhoff to Aubry and Mather] Seminar Bourbaki, Vol. 1983/84. Ast\'erisque N. 121-122 (1985), 147--170.

%\bibitem{Denj1932} A.~Denjoy, {\it Sur les courbes d\'efinies par les \'equations diff\'erentielles \`a la surface du tore,} Journal de Math\'ematiques Pures et Appliqu\'ees (1932), vol 11, Pages 333 - 376.

\bibitem{Fa1989} A.~ Fathi, {\em Expansiveness, hyperbolicity and Hausdorff dimension.} Comm. Math. Phys. 126 (1989), no. 2, 249--262.

\bibitem{Fogg2002} N.P.~Fogg,  Substitutions in dynamics, arithmetics and combinatorics.
Edited by V. Berth\'e, S. Ferenczi, C. Mauduit and A. Siegel. Lecture Notes in Mathematics, {\bf 1794} Springer-Verlag, Berlin, 2002. xviii+402 pp

 \bibitem{Go1985}  D.~L.~Goroff, {\em Hyperbolic sets for twist maps}, Ergodic Theory Dynam. Systems {\bf 5} (1985), no. 3, 337--339.
 
 \bibitem{KatHass1995}  B.~Hasselblatt  and A.~Katok,  Introduction to the modern theory of dynamical systems,
Encyclopedia of Mathematics and its Applications,
 {\bf 54},
 {Cambridge University Press},
 {Cambridge},
  {1995}


\bibitem{Her1979}  M.~R.~Herman,   {\it Sur la conjugaison diff\'erentiable des diff\'eomorphismes du cercle \`a des rotations}. (French) Inst. Hautes \'Etudes Sci. Publ. Math. No. 49 (1979), 5--233.

\bibitem{Her1983} M.~R.~Herman,  
Sur les courbes invariantes par les diff\'eomorphismes de l'anneau. Vol. 1. (French) [On the curves invariant under diffeomorphisms of the annulus. Vol. 1]
With an appendix by Albert Fathi. With an English summary. AstŽrisque, 103-104. Soci\'et\'e Math\'ematique de France, Paris, 1983. i+221 pp. 

 \bibitem{HokHol1986} K.~Hockett \& P. Holmes, {\em Josephson's junction, annulus maps, Birkhoff attractors, horseshoes and rotation sets. }Ergodic Theory Dynam. Systems 6 (1986), no. 2, 205--239. 

\bibitem{Kat1980}  A.~Katok, {\it Lyapunov exponents, entropy and periodic orbits for diffeomorphisms},  Inst. Hautes \'Etudes Sci. Publ. Math. No. 51 (1980), 137--173.

 \bibitem{LeC1987} P.~Le~Calvez, {\em Propri\' et\'es dynamiques des r\'egions d'instabilit\'e.} (French) [Dynamical properties of regions of instability] Ann. Sci. \'Ecole Norm. Sup. (4) 20 (1987), no. 3, 443--464.

\bibitem{LeC1988} P.~Le~Calvez,   {\it Les ensembles d'Aubry-Mather d'un diff\'eomorphisme conservatif de l'anneau d\'eviant la verticale sont en g\'en\'eral hyperboliques},  (French) [The Aubry-Mather sets of a conservative diffeomorphism of the annulus twisting the vertical are hyperbolic in general]  C. R. Acad. Sci. Paris S\'er. I Math.  306  (1988),  no. 1, 51--54.

\bibitem{LeCTal2018} P.~Le~Calvez \& F.A.~Tal,  {\em Topological horseshoes for surface homeomorphisms},  	arXiv:1803.04557

\bibitem{Mar1970}  N.G.~Markley, {\em  Homeomorphisms of the circle without periodic points},  Proc. London Math. Soc. (3) 20 (1970) 688--698.

\bibitem{Mat1982} J.N.~Mather.
{\it Existence of quasiperiodic orbits for twist homeomorphisms of the annulus}, 
Topology 21 (1982), no. 4, 457--46. 

\bibitem{Mat1985} J.N.~Mather
{\em More Denjoy minimal sets for area preserving diffeomorphisms.} 
Comment. Math. Helv. 60 (1985), no. 4, 508-557.

\bibitem{Moi1977}  E.E.~Moise, Geometric topology in dimensions 2 and 3. Graduate Texts in Mathematics, Vol. 47. Springer-Verlag, New York-Heidelberg, 1977. x+262 pp.

\bibitem{PaTa1987} J.~Palis \& F.~Takens,
Homoclinic bifurcations and hyperbolic dynamics.
16 Coloquio Brasileiro de Matemática. [16th Brazilian Mathematics Colloquium] Instituto de Matemática Pura e Aplicada (IMPA), Rio de Janeiro, 1987. iv+134 pp



%\bibitem{MorYoc2010} C.~Moreira \& J.~C. Yoccoz, {\em Tangences homoclines stables pour des ensembles hyperboliques de grande dimension fractale}. (French) [Stable homoclinic tangency for hyperbolic sets of large fractal dimension] Ann. Sci. \'Ec. Norm. Sup\'er. (4) 43 (2010), no. 1, 1--68. 

%\bibitem{Mos1977} J.~Moser,  {\em Proof of a generalized form of a fixed point theorem due to G. D. Birkhoff.} Geometry and topology (Proc. III Latin Amer. School of Math., Inst. Mat. Pura Aplicada CNPq, Rio de Janeiro, 1976), pp. 464-494. Lecture Notes in Math., Vol. 597, Springer, Berlin, 1977.


%\bibitem{Oli1987} F.~Oliveira, {\em On the generic existence of homoclinic points},  Ergodic Theory Dynam. Systems 7 (1987), no. 4, 567--595

%\bibitem{Pix1982} D.~Pixton, {\it Planar homoclinic points},  J. Differential Equations 44 (1982), no. 3, 365--382. 

%\bibitem{Rob2012} R.~C.~Robinson, {\em An introduction to dynamical systems--continuous and discrete}. Second edition. Pure and Applied Undergraduate Texts, 19. American Mathematical Society, Providence, RI, 2012. xx+733 pp

\bibitem{Sma1965}  S.~Smale, {\it 
Diffeomorphisms with many periodic points},  Differential and Combinatorial Topology (A Symposium in Honor of Marston Morse) (1965) pp. 63--80 Princeton Univ. Press, Princeton, N.J. 

\bibitem{Yoccoz1995}
Jean-Christophe Yoccoz.
\newblock Introduction to hyperbolic dynamics.
\newblock In {\em Real and complex dynamical systems ({H}iller\o d, 1993)},
  volume 464 of {\em NATO Adv. Sci. Inst. Ser. C Math. Phys. Sci.}, pages
  265--291. Kluwer Acad. Publ., Dordrecht, 1995.


%\bibitem{Zeh1973}  E.~Zehnder, {\em Homoclinic points near elliptic fixed points},  Comm. Pure Appl. Math. 26 (1973), 131--182.
\end{thebibliography}

\end{document}